\newtheorem{theorem}{Theorem}[section]
\newtheorem{corollary}[theorem]{Corollary}
\newtheorem{example}[theorem]{Example}
\newtheorem{remark}[theorem]{Remark}
\journal{Finite Fields and Their Applications}
\begin{document}

\begin{frontmatter}

%% Title, authors and addresses

%% use the tnoteref command within \title for footnotes;
%% use the tnotetext command for the associated footnote;
%% use the fnref command within \author or \address for footnotes;
%% use the fntext command for the associated footnote;
%% use the corref command within \author for corresponding author footnotes;
%% use the cortext command for the associated footnote;
%% use the ead command for the email address,
%% and the form \ead[url] for the home page:
%%
%% \title{Title\tnoteref{label1}}
%% \tnotetext[label1]{}
%% \author{Name\corref{cor1}\fnref{label2}}
%% \ead{email address}
%% \ead[url]{home page}
%% \fntext[label2]{}
%% \cortext[cor1]{}
%% \address{Address\fnref{label3}}
%% \fntext[label3]{}

\title{A New Criterion on Normal Bases of Finite Field Extensions}

%% use optional labels to link authors explicitly to addresses:
%% \author[label1,label2]{<author name>}
%% \address[label1]{<address>}
%% \address[label2]{<address>}

\author[XUT]{Aixian Zhang\corref{cor1}}
 \ead{zhangaixian1008@126.com}
\author[TSH]{Keqin Feng}
 \ead{kfeng@math.tsinghua.edu.cn}

 \cortext[cor1]{Corresponding author}
 \address[XUT]{Department of Mathematical Sciences, Xi'an  University of Technology,
Shanxi, 710048, China.}
 \address[TSH]{Department of Mathematical
Sciences, Tsinghua University,
 Beijing, 100084, China.}

\begin{abstract}
A new criterion on normal bases of finite field extension $\mathbb{F}_{q^n} / \mathbb{F}_{q}$
is presented and explicit criterions for several particular finite field extensions are derived
from this new criterion.
\end{abstract}

\begin{keyword}
Normal basis, finite field, idempotent.

\end{keyword}

\end{frontmatter}

\section{Introduction}
The determination of normal bases for finite field extensions is one of the important
topics in applications such as coding, cryptography and practical computation, particularly
multiplication operation in finite fields.

A series of criterions on normal bases has been given (\cite{LN,Wan}), many series of normal bases with lower
complexity have been found (\cite{ABV,CGPT,Gao,GL,Liao,LF,LY,Seguin,YP}), and explicit description to construct normal bases for specific cases of
finite field have been presented (\cite{BGM,GG,Kyuregyan,Peris,PWO,Samaev,WB}).

In this paper we present a new criterion on normal bases for general case of extension
$\mathbb{F}_{q^n} / \mathbb{F}_{q}$ in Section 2. As applications of this new criterion, we show several examples
in Section 3 which give explicit description of the normal bases for several specific extension of finite fields
including previous results in (\cite{Peris,PWO,WB}).

\section{A New Criterion on Normal Basis Generators for Finite Field Extensions}

Let $q=p^l$ be a power of prime number $p,l \geq 1, \ \mathbb{F}_{q}$ be the finite field with $q$ elements.
An element $ \alpha \in \mathbb{F}_{q^n}$ is called a normal basis generator (NBG) for extension
$\mathbb{F}_{q^n} / \mathbb{F}_{q}$ if $\mathbf{B}=\{\alpha, \alpha^q, \alpha^{q^2},\cdots,\alpha^{q^{n-1}}\}$
is a $\mathbb{F}_{q}$-basis of $\mathbb{F}_{q^n}$. In this case, $\mathbf{B}$ is called a normal basis for
$\mathbb{F}_{q^n} / \mathbb{F}_{q}$.

The normal bases for $\mathbb{F}_{q^n} / \mathbb{F}_{q}$ are closely related to the ring of $q$-polynomial
which we introduce now briefly. For more information  on normal bases and $q$-polynomial we refer to books \cite{LN,Wan}.

A $q$-polynomial (or called linearized polynomial) is a polynomial in the following form:
$$
L(x)=a_0x+a_1x^q+a_2x^{q^2}+\cdots+a_mx^{q^m} \ \ (a_i \in \mathbb{F}_{q}).
$$

Let $\mathcal{F}_q[x]$ be the set of all $q$-polynomials. Then $\mathcal{F}_q[x]$ is a ring with
respect to the ordinary addition and the following multiplication $\otimes$ :
$$
L(x) \otimes K(x)=L(K(x)) \ \ \ (\mbox{composition}).
$$
One of basic facts on $\mathcal{F}_q[x]$ is that the mapping
$$
\varphi:\mathbb{F}_{q}[x] \rightarrow \mathcal{F}_q[x], \ \sum^m_{i=0}a_ix^i \mapsto \sum^m_{i=0}a_ix^{q^i} \ (a_i \in \mathbb{F}_{q})
$$
is an isomorphism of rings. Therefore $\mathcal{F}_q[x]$ is a principal ideal domain with identity $x.$ We use the notation $\parallel$
to express the divisibility in $\mathcal{F}_q[x]$. Namely, for $L(x)$ and $M(x)$ in $\mathcal{F}_q[x]$, $L(x) \parallel M(x)$ means that
$L(x) \neq 0$ and there exists $N(x) \in \mathcal{F}_q[x]$ such that $M(x)=L(x) \otimes N(x)=N(x) \otimes L(x).$

Let $n$ be a positive integer. For $\alpha \in \mathbb{F}_{q^n},$ the set
$$
I_\alpha=\{M(x) \in \mathcal{F}_q[x]: M(\alpha)=0\}
$$
is a nonzero ideal of $\mathcal{F}_q[x]$ and $x^{q^n}-x \in I_\alpha.$ The monic generator $M_\alpha(x)$ of the ideal $I_\alpha$
is called the minimal $q$-polynomial of $\alpha$ over $\mathbb{F}_{q}.$ Particularly, $M_\alpha(x)$ is an irreducible polynomial
in $\mathcal{F}_q[x]$ and $M_\alpha(x) \parallel x^{q^n}-x .$ Let $x^n -1$ has the following standard decomposition in
$\mathbb{F}_{q}[x]:$
\begin{equation}\label{eqn-decom}
x^n -1=p_1(x)^{a_1}p_2(x)^{a_2}\cdots p_r(x)^{a_r},
\end{equation}
where $p_1(x),p_2(x),\cdots, p_r(x)$ are distinct monic irreducible polynomials in $\mathbb{F}_{q}[x]$
and $a_i \geq 1 \ (1 \leq i \leq r).$ Then the standard decomposition of $x^{q^n}-x =\varphi(x^n -1)$
in $\mathcal{F}_q[x]$ is
$$
x^{q^n}-x =P_1(x)^{a_1} \otimes P_2(x)^{a_2} \otimes \cdots  \otimes P_r(x)^{a_r},
$$
where  $P_i(x)=\varphi(p_i(x)) \ (1 \leq i \leq r)$ are distinct monic irreducible $q$-polynomials in
$\mathcal{F}_q[x]$.

An element $\alpha \in \mathbb{F}_{q^n}$ is a NBG of $\mathbb{F}_{q^n} / \mathbb{F}_{q}$ means,
by definition $ \{\alpha, \alpha^q, \alpha^{q^2},\cdots,\alpha^{q^{n-1}}\}$ is $\mathbb{F}_{q}$-linear independent.
This is also equivalent to that there is no non-zero $q$-polynomial $G(x)=\sum^{n-1}\limits_{i=0}c_ix^{q^i} \ (c_i \in \mathbb{F}_{q})$
such that $G(x) \parallel x^{q^n}-x$ and $G(\alpha)=0.$ From this we give the following usual criterions on $\alpha \in \mathbb{F}_{q^n}$
being a NBG of $\mathbb{F}_{q^n} / \mathbb{F}_{q}$.

\begin{theorem}\label{thm-oddNBG}(\cite{LN,Wan})
Suppose that $x^n-1$ has the decomposition (\ref{eqn-decom}) in $\mathbb{F}_q[x]$.
Let $l_i(x)=\frac{x^n -1}{p_i(x)}$ and $L_i(x)=\varphi(l_i(x)) \ (1 \leq i \leq r).$
Then for $\alpha \in \mathbb{F}_{q^n}, \ \alpha$ is a NBG of $\mathbb{F}_{q^n} / \mathbb{F}_{q}$
if and only if one of the following conditions satisfied

(1) The minimal $q$-polynomial $M_\alpha(x)$ of $\alpha$ is $x^{q^n}-x.$

(2) For each factor $m(x)$ of $x^n-1 $ in $\mathbb{F}_q[x]$ with degree $< n$ and
$M(x)=\varphi(m(x)), \ M(\alpha)\neq 0.$

(3) $L_i(\alpha)\neq 0 \ (1 \leq i \leq r).$
\end{theorem}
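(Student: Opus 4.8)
The plan is to prove the chain ``$\alpha$ is a NBG $\Leftrightarrow$ (1) $\Leftrightarrow$ (2) $\Leftrightarrow$ (3)'', using throughout two standing facts. First, since $I_\alpha$ is the ideal generated by $M_\alpha(x)$, for every $G(x)\in\mathcal{F}_q[x]$ one has $G(\alpha)=0$ if and only if $M_\alpha(x)\parallel G(x)$. Second, because $\varphi$ is a ring isomorphism, the relation $\parallel$ in $\mathcal{F}_q[x]$ corresponds to ordinary divisibility in $\mathbb{F}_q[x]$; I set $m_\alpha(x)=\varphi^{-1}(M_\alpha(x))$, so that $m_\alpha(x)$ is monic, $m_\alpha(x)\mid x^n-1$ (from $M_\alpha(x)\parallel x^{q^n}-x$), and $M_\alpha(x)=x^{q^n}-x$ precisely when $m_\alpha(x)=x^n-1$. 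Now ``$\alpha$ is a NBG'' means, by definition, that $\sum_{i=0}^{n-1}c_i\alpha^{q^i}=0$ has only the trivial solution over $\mathbb{F}_q$, i.e.\ no nonzero $q$-polynomial $G(x)$ with $\deg\varphi^{-1}(G)\le n-1$ satisfies $G(\alpha)=0$. If such a $G$ existed, then $M_\alpha(x)\parallel G(x)$ would give $m_\alpha(x)\mid\varphi^{-1}(G(x))$, hence $\deg m_\alpha\le n-1$ and $m_\alpha(x)\ne x^n-1$; conversely, if $M_\alpha(x)\ne x^{q^n}-x$ then $m_\alpha(x)$ is a proper divisor of $x^n-1$, so $\deg m_\alpha<n$ and $M_\alpha(x)$ itself is such a forbidden $G$. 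This is exactly the equivalence ``$\alpha$ is a NBG $\Leftrightarrow$ (1)''.

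For $(1)\Leftrightarrow(2)$: if $M_\alpha(x)\ne x^{q^n}-x$, then $m(x)=m_\alpha(x)$ is a divisor of $x^n-1$ of degree $<n$ with $M(x)=M_\alpha(x)$ and $M(\alpha)=0$, so (2) fails; and if $M_\alpha(x)=x^{q^n}-x$, then for any $m(x)\mid x^n-1$ with $\deg m<n$ the relation $M(\alpha)=0$ would force $m_\alpha(x)=x^n-1\mid m(x)$, impossible, so $M(\alpha)\ne0$ and (2) holds. The implication $(2)\Rightarrow(3)$ is immediate, since each $l_i(x)$ divides $x^n-1$ with $\deg l_i=n-\deg p_i<n$. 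For $(3)\Rightarrow(2)$ I argue by contraposition: if (2) fails there is $m(x)\mid x^n-1$ with $\deg m<n$ and $M(\alpha)=0$, hence $M_\alpha(x)\parallel M(x)$ and so $m_\alpha(x)\mid m(x)\mid x^n-1$ with $\deg m_\alpha<n$. Writing $m_\alpha(x)=\prod_{i=1}^r p_i(x)^{b_i}$ with $0\le b_i\le a_i$, the strict inequality forces $b_j\le a_j-1$ for some $j$, whence $m_\alpha(x)$ divides $p_j(x)^{a_j-1}\prod_{i\ne j}p_i(x)^{a_i}=l_j(x)$; applying $\varphi$ yields $M_\alpha(x)\parallel L_j(x)$, so $L_j(\alpha)=0$ and (3) fails.

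The only point carrying real content is the divisibility step at the end: every proper monic divisor of $x^n-1$ must divide $l_j(x)=(x^n-1)/p_j(x)$ for at least one index $j$. This is what lets the finite test of the $r$ polynomials $L_1,\dots,L_r$ in (3) stand in for the test against all proper divisors demanded by (2), and it follows at once from unique factorization in $\mathbb{F}_q[x]$ once one observes that a proper divisor must omit at least one $p_j$ from its full multiplicity $a_j$. Everything else is bookkeeping through the isomorphism $\varphi$ together with the identity $I_\alpha=(M_\alpha(x))$, so I expect no serious obstacle beyond keeping that translation straight.
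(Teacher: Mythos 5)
Your proof is correct. Note that the paper itself gives no proof of this theorem --- it is quoted from the literature (Lidl--Niederreiter and Wan) --- but your argument is the standard one and uses exactly the machinery the paper sets up beforehand: the identity $I_\alpha=(M_\alpha(x))$, the transfer of divisibility through the isomorphism $\varphi$, and the observation that every proper monic divisor of $x^n-1$ divides some $l_j(x)=(x^n-1)/p_j(x)$, which is indeed the one step with real content in passing from (2) to (3).
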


The criterions presented in Theorem \ref{thm-oddNBG} heavily depend on the decomposition
(\ref{eqn-decom}) of $x^n-1.$ Now we present a new criterion on NBG of $\mathbb{F}_{q^n} / \mathbb{F}_{q}$
which we use the $q$-equivalent classes of the elements in $\mathrm{Z}_n=\mathbb{Z}/ n\mathbb{Z}.$
To compute these $q$-equivalent classes is easier than to find the decomposition of $x^n-1$ in $\mathbb{F}_q[x]$.

Firstly we assume that $(n,q)=1$ (The other case can be easily reduced to $(n,q)=1$ case, see Theorem \ref{thm-redu}).
Then the decomposition of $x^n-1$ in $\mathbb{F}_q[x]$ is
\begin{equation}\label{eqn-coprimedecom}
x^n -1=p_1(x)p_2(x)\cdots p_r(x),
\end{equation}
where $p_i(x) \ (1 \leq i \leq r)$ are distinct monic irreducible polynomials in $\mathbb{F}_q[x]$.
The ring $\mathbf{R}=\mathbb{F}_q[x] / (x^n -1)$ is semi-simple and, by Chinese Remainder Theorem,
is a direct sum of finite fields:
\begin{equation}\label{eqn-CRT}
\mathbf{R}=\frac{\mathbb{F}_q[x]}{(x^n -1)} \cong \oplus^r_{i=1}\frac{\mathbb{F}_q[x]}{(p_i(x))}\cong \oplus^r_{i=1}\mathbb{F}_{q^{d_i}},
\end{equation}
where $d_i=\deg p_i(x) \ (1 \leq i \leq r).$ Let $\zeta$ be a fixed $n$-th primitive root of 1 in the algebraic
closure of $\mathbb{F}_q.$ Then $\mathrm{Z}_n$ is partitioned into $r$ $q$-classes
\begin{eqnarray}\label{eqn-parti}
\mathcal{S}_1 &=& \{a_1=0\} \nonumber \\
\mathcal{S}_2 &=& \{a_2, a_2q, \cdots, a_2q^{d_2-1}\}  \ \ (a_2q^{d_2}=a_2 \in \mathrm{Z}_n) \nonumber \\
&&\vdots  \nonumber \\
\mathcal{S}_r &=& \{a_r, a_r q, \cdots, a_r q^{d_r-1}\} \ \  (a_r q^{d_r}=a_r \in \mathrm{Z}_n)
\end{eqnarray}
and the roots $\{ 1, \zeta, \zeta^2, \cdots, \zeta^{n-1}\}$ of $x^n-1$ are partitioned into $r$ $\mathbb{F}_{q}$-conjugate
classes:
\begin{eqnarray*}
\mathcal{A}_1 &=& \{\alpha_1=1\} \\
\mathcal{A}_2 &=& \{\alpha_2, \alpha^q_2, \cdots, \alpha^{ q^{d_2-1}}_2\}  \\
&&\vdots \\
\mathcal{A}_r &=& \{\alpha_r, \alpha^q_r, \cdots, \alpha^{q^{d_r-1}}_r \},
\end{eqnarray*}
where $\alpha_i=\zeta^{a_i}, \ d_i=\deg p_i(x)$ and $\mathcal{A}_i$ is the set of roots of
$p_i(x) \ (1 \leq i \leq r, p_1(x)=x-1).$

Our new criterion on NBG for $\mathbb{F}_{q^n} / \mathbb{F}_{q}$ is expressed in terms of the
orthogonal idempotent elements $e_i(x) \in \mathbb{F}_{q}[x], \ \deg e_i(x) \leq n-1 \ (1 \leq i \leq r) $
satisfying
\begin{equation}\label{eqn-conj}
e_i(x) \equiv \delta_{ij} \ (\bmod  p_j(x)) \ \ \ (1 \leq i,j \leq r),
\end{equation}
where $\delta_{ij}$ is the Kronecker symbol.

By Chinese Remainder Theorem, such idempotents $e_i(x) \ (1 \leq i \leq r)$ exist and uniquely determined.
From (\ref{eqn-conj}) we get
\begin{equation}\label{eqn-equa}
e_i(\alpha_j)=\delta_{ij} \ (1 \leq i,j \leq r)
\end{equation}
and have the following orthogonal idempotent decomposition in $\mathbf{R}=\frac{\mathbb{F}_q[x]}{(x^n -1)},$
\begin{equation}\label{eqn-idem}
1=e_1(x)+\cdots+e_r(x), \ \ e_i(x)e_j(x)=\delta_{ij}e_i(x) \ (1 \leq i,j \leq r).
\end{equation}

Let $E_i(x)=\varphi(e_i(x)) \in \frac{\mathcal{F}_q[x]}{(x^{q^n} -x)} \ (1 \leq i \leq r).$
Our new criterion on NBG for $\mathbb{F}_{q^n} / \mathbb{F}_{q}$ is the following fundamental result.

\begin{theorem}\label{thm-newNBG}
Let $e_i(x) \ (1 \leq i \leq r)$ be the idempotent elements in $\mathbf{R}=\frac{\mathbb{F}_q[x]}{(x^n -1)}$
defined by (\ref{eqn-conj}). $E_i(x)=\varphi(e_i(x)) \in \frac{\mathcal{F}_q[x]}{(x^{q^n} -x)}.$ Then for
$\alpha \in \mathbb{F}_{q^n}, \ \alpha$ is a NBG for $\mathbb{F}_{q^n} / \mathbb{F}_{q}$ if and only if
$E_i(\alpha) \neq 0 \ (1 \leq i \leq r).$
\end{theorem}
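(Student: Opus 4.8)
My plan is to derive Theorem~\ref{thm-newNBG} from the classical criterion already recorded as Theorem~\ref{thm-oddNBG}(3): since $(n,q)=1$, one has $l_i(x)=(x^n-1)/p_i(x)=\prod_{j\neq i}p_j(x)$ and $L_i(x)=\varphi(l_i(x))$, and $\alpha$ is a NBG for $\mathbb{F}_{q^n}/\mathbb{F}_q$ if and only if $L_i(\alpha)\neq 0$ for all $i$. Thus it suffices to prove the purely local equivalence: for each fixed $i$ and every $\beta\in\mathbb{F}_{q^n}$,
$$E_i(\beta)=0\iff L_i(\beta)=0.$$
Taking $\beta=\alpha$ and combining with Theorem~\ref{thm-oddNBG}(3) will then immediately give the theorem.

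The first thing I would set up is the transport mechanism along $\varphi$. Because $\alpha^{q^n}=\alpha$, the $q$-polynomial $x^{q^n}-x$ vanishes at $\alpha$, so the evaluation map $F(x)\mapsto F(\alpha)$ is well defined on the quotient ring $\mathcal{F}_q[x]/(x^{q^n}-x)$; and since $\varphi$ turns ordinary multiplication into composition $\otimes$, it induces a ring isomorphism $\mathbf{R}=\mathbb{F}_q[x]/(x^n-1)\cong \mathcal{F}_q[x]/(x^{q^n}-x)$, $\bar g\mapsto\overline{\varphi(g)}$. The consequence I will use repeatedly is that if $f(x)\equiv h(x)g(x)\pmod{x^n-1}$, then $\varphi(f)=\varphi(h)\otimes\varphi(g)$ in $\mathcal{F}_q[x]/(x^{q^n}-x)$, hence $\varphi(f)(\beta)=\varphi(h)\big(\varphi(g)(\beta)\big)$ for every $\beta\in\mathbb{F}_{q^n}$; in particular $\varphi(g)(\beta)=0$ forces $\varphi(f)(\beta)=0$.

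The crux is then the identity of ideals $e_i(x)\mathbf{R}=l_i(x)\mathbf{R}$ in $\mathbf{R}$. Since $(n,q)=1$, the polynomial $x^n-1$ is squarefree, so $p_i(x)$ is coprime to $l_i(x)=\prod_{j\neq i}p_j(x)$; hence $l_i(x)$ is a unit modulo $p_i(x)$ while $p_j(x)\mid l_i(x)$ for $j\neq i$. Picking $w_i(x)\in\mathbb{F}_q[x]$ with $w_i(x)l_i(x)\equiv 1\pmod{p_i(x)}$ we get $w_i(x)l_i(x)\equiv\delta_{ij}\pmod{p_j(x)}$ for all $j$, so by the uniqueness in (\ref{eqn-conj}) (i.e. by CRT) $w_i(x)l_i(x)\equiv e_i(x)\pmod{x^n-1}$; conversely $p_j(x)\mid l_i(x)\big(e_i(x)-1\big)$ for every $j$ (for $j\neq i$ because $p_j\mid l_i$, and for $j=i$ because $p_i\mid e_i-1$ by (\ref{eqn-conj})), so $l_i(x)e_i(x)\equiv l_i(x)\pmod{x^n-1}$. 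Applying $\varphi$ gives $E_i(x)=W_i(x)\otimes L_i(x)$ and $L_i(x)=L_i(x)\otimes E_i(x)$ in $\mathcal{F}_q[x]/(x^{q^n}-x)$, where $W_i=\varphi(w_i)$. Evaluating at $\beta$: $E_i(\beta)=W_i\big(L_i(\beta)\big)$ and $L_i(\beta)=L_i\big(E_i(\beta)\big)$, and since any linearized polynomial maps $0$ to $0$, each of $L_i(\beta)=0$ and $E_i(\beta)=0$ implies the other. This establishes the local equivalence and finishes the proof.

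The only step needing genuine care is the ideal identity $e_i(x)\mathbf{R}=l_i(x)\mathbf{R}$ (equivalently, the explicit solvability of $w_i l_i\equiv e_i$); the rest is formal bookkeeping with the isomorphism $\varphi$ and the quotient by $x^{q^n}-x$. I would also add, as a remark, the conceptual reading behind this: viewing $\mathbb{F}_{q^n}$ as an $\mathbf{R}$-module with $x$ acting as the Frobenius $\beta\mapsto\beta^q$, the orthogonal decomposition (\ref{eqn-idem}) yields $\mathbb{F}_{q^n}=\bigoplus_{i=1}^r E_i(\mathbb{F}_{q^n})$ with each summand a one-dimensional $\mathbb{F}_{q^{d_i}}$-space, $\alpha$ is a NBG exactly when it generates $\mathbb{F}_{q^n}$ over $\mathbf{R}$, and $\alpha=\sum_i E_i(\alpha)$ generates if and only if every component $E_i(\alpha)$ is nonzero.
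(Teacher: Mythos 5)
Your proof is correct and follows essentially the same route as the paper's: reduce to Theorem~\ref{thm-oddNBG}(3) and establish the pointwise equivalence $E_i(\alpha)=0\iff L_i(\alpha)=0$ from the two congruences $l_i(x)e_i(x)\equiv l_i(x)$ and $e_i(x)\equiv w_i(x)l_i(x)\pmod{x^n-1}$, transported through $\varphi$. The only cosmetic difference is that you produce $w_i$ directly via CRT, whereas the paper writes $a(x)l_i(x)+b(x)p_i(x)=1$ and multiplies by $e_i(x)$ to get $e_i\equiv e_i a l_i$; this is the same ideal identity.
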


\begin{proof}
Let $l_i(x)=\frac{x^n-1}{p_i(x)}, \ L_i(x)=\varphi(l_i(x)) \ (1 \leq i \leq r).$
From Theorem \ref{thm-oddNBG} we know that $\alpha$ is a NBG for $\mathbb{F}_{q^n} / \mathbb{F}_{q}$
if and only if $L_i(\alpha)\neq 0 \ (1 \leq i \leq r).$ Now we claim that for each $i \ (1 \leq i \leq r),$
$$
L_i(\alpha)=0 \Leftrightarrow E_i(\alpha)=0.
$$
From this the Theorem \ref{thm-newNBG} follows.

From (\ref{eqn-conj}) we get
$$ l_i(x) \equiv l_i(x)e_i(x)(\bmod x^n-1).$$
By the isomorphism $\varphi:\mathbb{F}_{q}[x] \rightarrow \mathcal{F}_q[x]$ we get the following congruences in
ring $\mathcal{F}_q[x]:$
$$
L_i(x) \equiv L_i(x)\otimes E_i(x)(\bmod x^{q^n}-x).
$$

Then by $\alpha^{q^n}=\alpha$ we get
$$
L_i(\alpha)=L_i(x)\otimes E_i(x)\mid_{x=\alpha}=L_i(E_i(\alpha)).
$$
Particularly, if $E_i(\alpha)=0,$ then $L_i(\alpha)=L_i(0)=0.$

Conversely, the fact $(l_i(x),p_i(x))=1$ in $\mathbb{F}_{q}[x]$ implies that there exist
$a(x), b(x) \in \mathbb{F}_{q}[x]$ such that $a(x)l_i(x)+b(x)p_i(x)=1.$ Therefore
\begin{eqnarray*}
e_i(x)&=&e_i(x)a(x)l_i(x)+e_i(x)b(x)p_i(x)\\
&\equiv & e_i(x)a(x)l_i(x) (\mbox{mod} x^n-1),
\end{eqnarray*}
since $e_i(x)p_i(x) \equiv 0(\bmod x^n-1)$ by (\ref{eqn-conj}). Therefore in $\mathcal{F}_q[x],
E_i(x) \equiv E_i(x) \otimes A(x)\otimes L_i(x) (\bmod x^{q^n}-x),$
where $A(x)=\varphi(a(x)).$ Therefore $L_i(\alpha)=0$ implies that $E_i(\alpha)=0.$
This completes the proof of Theorem \ref{thm-newNBG}.
\end{proof}

Next we present a rather easy method to compute the idempotents $e_i(x)$
and so $E_i(x)=\varphi(e_i(x)) \ (1 \leq i \leq r).$

\begin{theorem}\label{thm-idem}
Let
$$
\varepsilon_i(x)=\sum_{a \in \mathcal{S}_i}x^a
$$
where $\mathcal{S}_i \ (1 \leq i \leq r)$ are $q$-classes of $\mathrm{Z}_n$ are defined by
partition (\ref{eqn-parti}), $\alpha_i \in \mathcal{A}_i$ and $\mathbf{M}$ is an $r \times r$
matrix over $\mathbb{F}_{q}$ defined by
$$\mathbf{M}=(\varepsilon_i(\alpha_j))_{1 \leq i,j \leq r}.$$

Then $\det(\mathbf{M})\neq 0$ and

\begin{equation}\label{eqn-matrix}
\left(
          \begin{array}{c}
          e_1(x) \\
          \vdots \\
           e_r(x) \\
            \end{array}
            \right)=\mathbf{M^{-1}} \left(
          \begin{array}{c}
          \varepsilon_1(x) \\
          \vdots \\
           \varepsilon_r(x). \\
            \end{array}
            \right).
\end{equation}
\end{theorem}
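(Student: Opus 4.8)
The plan is to exploit the structure of the ring $\mathbf{R}=\mathbb{F}_q[x]/(x^n-1)$ via its decomposition into finite fields (\ref{eqn-CRT}), and to show that the functions $\varepsilon_i(x)$ form a convenient "coordinate system" on this ring that is related to the idempotents $e_i(x)$ by the invertible matrix $\mathbf{M}$. First I would observe that because $(n,q)=1$, the elements $1,\zeta,\dots,\zeta^{n-1}$ are exactly the $n$ distinct roots of $x^n-1$, and a polynomial $f(x)\in\mathbb{F}_q[x]$ of degree $<n$ is completely determined by its values $f(\zeta^0),\dots,f(\zeta^{n-1})$; moreover, since the $\mathbb{F}_q$-conjugate classes $\mathcal{A}_1,\dots,\mathcal{A}_r$ partition these roots, $f(x)\in\mathbb{F}_q[x]$ is determined already by the values $f(\alpha_1),\dots,f(\alpha_r)$ one representative per class (the other values being Frobenius conjugates). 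This gives an injective $\mathbb{F}_q$-linear evaluation map $\mathbf{R}\hookrightarrow \oplus_{i=1}^r\mathbb{F}_{q^{d_i}}$, $f\mapsto(f(\alpha_1),\dots,f(\alpha_r))$, which is in fact the isomorphism (\ref{eqn-CRT}); in particular $\dim_{\mathbb{F}_q}\mathbf{R}=n=\sum d_i$.

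Next I would pin down the role of $\varepsilon_i(x)=\sum_{a\in\mathcal{S}_i}x^a$. The key point is that the $\varepsilon_i(x)$, $1\le i\le r$, are $\mathbb{F}_q$-linearly independent in $\mathbf{R}$: they are sums over disjoint nonempty subsets of $\{x^0,x^1,\dots,x^{n-1}\}$, hence linearly independent in the $n$-dimensional space $\mathbf{R}$. On the other hand, the $q$-class $\mathcal{S}_i$ is, by its definition in (\ref{eqn-parti}), closed under multiplication by $q$ modulo $n$, which means $\varepsilon_i(x)$ is fixed by the Frobenius-induced ring automorphism $x\mapsto x^q$ of $\mathbf{R}$; equivalently $\varepsilon_i(x)$ lies in the $\mathbb{F}_q$-subspace of $\mathbf{R}$ spanned by the $e_j(x)$ (the fixed subring under this automorphism, whose dimension is exactly $r$). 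Since both the $\{e_j(x)\}_{1\le j\le r}$ and the $\{\varepsilon_i(x)\}_{1\le i\le r}$ are bases of this same $r$-dimensional fixed subspace, there is a unique invertible change-of-basis matrix expressing one in terms of the other.

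To identify that matrix as $\mathbf{M}$, I would evaluate $\varepsilon_i$ at the representatives $\alpha_j=\zeta^{a_j}$. Writing $\varepsilon_i(x)=\sum_{k}c_{ik}e_k(x)$ with $c_{ik}\in\mathbb{F}_q$ and using (\ref{eqn-equa}), namely $e_k(\alpha_j)=\delta_{kj}$, gives $\varepsilon_i(\alpha_j)=c_{ij}$; hence $(c_{ik})=\mathbf{M}$ exactly, so $\mathbf{M}$ is the matrix with $\boldsymbol{\varepsilon}=\mathbf{M}\,\mathbf{e}$ (as column vectors of ring elements), and since this change of basis is invertible, $\det\mathbf{M}\neq0$ and $\mathbf{e}=\mathbf{M}^{-1}\boldsymbol{\varepsilon}$, which is precisely (\ref{eqn-matrix}). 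The one point deserving care — and what I expect to be the main obstacle to state cleanly — is the claim that the Frobenius-fixed subspace of $\mathbf{R}$ has dimension exactly $r$ and is spanned by the $e_j(x)$: under the isomorphism (\ref{eqn-CRT}) the automorphism $x\mapsto x^q$ acts as the Frobenius on each summand $\mathbb{F}_{q^{d_i}}$, whose fixed field is $\mathbb{F}_q$, so the fixed subring is $\oplus_{i=1}^r\mathbb{F}_q$, of dimension $r$, with the $e_j$ mapping to the standard basis vectors; once this identification is made the rest is immediate. (Alternatively, one can avoid the fixed-subspace language entirely and argue directly: the $\varepsilon_i$ are linearly independent, so it suffices to show each $\varepsilon_i$ lies in $\mathrm{span}_{\mathbb{F}_q}\{e_j\}$, which follows because $\varepsilon_i\bmod p_j(x)$ is a $q^{d_j}$-Frobenius-invariant element of $\mathbb{F}_{q^{d_j}}=\mathbb{F}_q[x]/(p_j(x))$, hence lies in $\mathbb{F}_q$, so $\varepsilon_i\equiv c_{ij}\pmod{p_j(x)}$ for scalars $c_{ij}\in\mathbb{F}_q$, and then $\varepsilon_i=\sum_j c_{ij}e_j$ by Chinese Remainder Theorem with $c_{ij}=\varepsilon_i(\alpha_j)$.)
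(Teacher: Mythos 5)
Your proposal is correct and follows essentially the same route as the paper: both arguments rest on the facts that the Frobenius-fixed elements of $\mathbf{R}$ form an $r$-dimensional space containing both $\{e_i\}$ and $\{\varepsilon_i\}$, and on evaluating at the representatives $\alpha_j$ via $e_k(\alpha_j)=\delta_{kj}$. The only cosmetic difference is the direction of the change of basis — you write $\varepsilon_i=\sum_k \varepsilon_i(\alpha_j)e_k$ and invert, whereas the paper writes $\mathbf{e}=\mathbf{A}\boldsymbol{\varepsilon}$ and deduces $\mathbf{A}\mathbf{M}=\mathbf{I}_r$.
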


\begin{proof}
Firstly we prove that $\varepsilon_i(\alpha_j) \in \mathbb{F}_{q} \ (1 \leq i,j \leq r).$
Since $\alpha_j=\zeta^{a_j}$, we have $\varepsilon_i(\alpha_j)=\sum^{d_i-1}\limits_{\lambda=0}\zeta^{a_ja_iq^\lambda}$ and
\begin{eqnarray*}
\varepsilon_i(\alpha_j)^q &=&\sum^{d_i-1}_{\lambda=0}(\zeta^{a_iq^{\lambda+1}})^{a_j}\\
&=&\sum^{d_i-1}_{\lambda=0}(\zeta^{a_iq^{\lambda}})^{a_j} = \varepsilon_i(\alpha_j).
\end{eqnarray*}
Therefore $\varepsilon_i(\alpha_j) \in \mathbb{F}_{q}$ and $\mathbf{M}$ is a matrix over $\mathbb{F}_{q}$.
By the definition of $\varepsilon_i(x)$ we know that
$$
\varepsilon_i(x) \equiv x^{a_i}+ x^{a_iq}+\cdots+x^{a_iq^{d_i-1}} (\bmod x^n-1) \ (1 \leq i \leq r).
$$
Then by (\ref{eqn-idem}) we have $e_i(x)^q \equiv e_i(x)(\bmod x^n-1).$ Therefore $e_i(x)$ is a
$\mathbb{F}_{q}$-linear combination of $\varepsilon_1(x),\varepsilon_2(x),\cdots,\varepsilon_r(x).$
Namely,
$$\left(
          \begin{array}{c}
          e_1(x) \\
          \vdots \\
           e_r(x) \\
            \end{array}
            \right)= \mathbf{A} \left(
          \begin{array}{c}
          \varepsilon_1(x) \\
          \vdots \\
           \varepsilon_r(x) \\
            \end{array}
            \right), $$
where $\mathbf{A}$ is an $r \times r$ matrix over $\mathbb{F}_{q}$. By using (\ref{eqn-equa}), we get
$$\mathbf{I}_r=\left(
                \begin{array}{ccc}
                  e_1(\alpha_1) & \cdots & e_1(\alpha_r) \\
                  \vdots &  & \vdots \\
                  e_r(\alpha_1) & \cdots & e_r(\alpha_r) \\
                \end{array}
              \right)=\mathbf{A}\left(
                \begin{array}{ccc}
                  \varepsilon_1(\alpha_1) & \cdots & \varepsilon_1(\alpha_r) \\
                  \vdots &  & \vdots \\
                  \varepsilon_r(\alpha_1) & \cdots & \varepsilon_r(\alpha_r) \\
                \end{array}
              \right)=\mathbf{AM}.
$$
Therefore $\det(\mathbf{M})\neq 0$ and $\mathbf{A}=\mathbf{M^{-1}}.$
This completes the proof of the Theorem \ref{thm-idem}.
\end{proof}

\section{Examples}

\begin{example}
Let $n$ be a prime number, $ q=p^m, \ n \neq p.$ Suppose that $q$ is a primitive root of $n$
which means that $(\mathbb{Z}/ n\mathbb{Z} )^\ast=\langle q\rangle$. Let $\zeta$ be an $n$-th
primitive root of 1 so that $\mathbb{F}_{q}(\zeta)=\mathbb{F}_{q^{n-1}}.$ Then $x^n-1$ is decomposed in
$\mathbb{F}_{q}[x]$ as
$$
x^n-1=(x-1)p_2(x),
$$
where $p_2(x)=x^{n-1}+x^{n-2}+\cdots+x+1$ is irreducible in $\mathbb{F}_{q}[x]$.
The $\mathbb{F}_{q}$-conjugate classes of $\{\zeta^\lambda: 0 \leq \lambda \leq n-1\}$ are
\begin{eqnarray*}
\mathcal{A}_1 &=&\{1\}, \ \alpha_1=1 \\
\mathcal{A}_2 &=& \{\zeta^{q^l}: 0 \leq l \leq n-2\}=\{\zeta^\lambda: 1 \leq \lambda \leq n-1\}.
\end{eqnarray*}
Therefore $\varepsilon_1(x)=1, \ \varepsilon_2(x)=x+x^2+\cdots+x^{n-1},$ and
$$
\mathbf{M}=\left(
             \begin{array}{cc}
               \varepsilon_1(1) &\varepsilon_1(\zeta) \\
               \varepsilon_2(1) &\varepsilon_2(\zeta) \\
             \end{array}
           \right)
           =\left(
             \begin{array}{cc}
               1 &1 \\
               n-1 & -1 \\
             \end{array}
           \right),
           \mathbf{M^{-1}}=\frac{1}{n}\left(
             \begin{array}{cc}
               1 &1 \\
               n-1 & -1 \\
             \end{array}
           \right).
$$
Therefore by (\ref{eqn-matrix}),
\begin{eqnarray*}
e_1(x)&=&\frac{1}{n}(\varepsilon_1(x)+\varepsilon_2(x))=\frac{1}{n}(1+x+x^2+\cdots+x^{n-1})\\
e_2(x)&=&\frac{1}{n}((n-1)\varepsilon_1(x)-\varepsilon_2(x))=1-\frac{1}{n}(1+x+x^2+\cdots+x^{n-1})=1-e_1(x),
\end{eqnarray*}
and
$$
E_1(x)=\frac{1}{n}(\sum^{n-1}_{i=0}x^{q^i}), \ E_2(x)=x-E_1(x).
$$
Let $\mathrm{Tr}$ be the trace mapping for $\mathbb{F}_{q^n} / \mathbb{F}_{q}.$
Namely, for $\alpha \in \mathbb{F}_{q^n},$
$$
\mathrm{Tr}(\alpha)=\sum^{n-1}_{i=0}\alpha^{q^i}=nE_1(\alpha).
$$
Therefore,
$$
E_1(\alpha)=\frac{1}{n}\mathrm{Tr}(\alpha), \ E_2(\alpha)=\alpha- \frac{1}{n}\mathrm{Tr}(\alpha).
$$

By Theorem \ref{thm-newNBG}, for $\alpha \in \mathbb{F}_{q^n},$
\begin{eqnarray*}
\alpha  \ \mbox{is \ a \ NBG \ for} \  \mathbb{F}_{q^n} / \mathbb{F}_{q}
&\Leftrightarrow & E_1(\alpha) \neq 0 \  \mbox{and} \  E_2(\alpha) \neq 0 \\
&\Leftrightarrow & \mathrm{Tr}(\alpha) \neq 0 \ \mbox{and} \  n\alpha \neq \mathrm{Tr}(\alpha) \in \mathbb{F}_{q} \\
&\Leftrightarrow & \mathrm{Tr}(\alpha) \neq 0 \ \mbox{and} \  \alpha \notin \mathbb{F}_{q} (\mbox{since}\  \alpha \in \mathbb{F}_{q} \
\mbox{implies \ that} \  n\alpha = \mathrm{Tr}(\alpha)).
\end{eqnarray*}
Therefore we get the following result given by Pei et al. in \cite{PWO}.
\end{example}

\begin{theorem}\label{thm-Pei}
Let $q=p^m, \ n$ be a prime number, $n \neq p.$ If $(\mathbb{Z}/ n\mathbb{Z} )^\ast=\langle q\rangle$.
Then for $\alpha \in \mathbb{F}_{q^n}, \ \alpha$ is a NBG for $\mathbb{F}_{q^n} / \mathbb{F}_{q}$
if and only if $ \alpha \notin \mathbb{F}_{q}$ and $\mathrm{Tr}(\alpha) \neq 0$, where $\mathrm{Tr}$
is the trace mapping for $\mathbb{F}_{q^n} / \mathbb{F}_{q}$.
\end{theorem}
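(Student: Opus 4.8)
The plan is to obtain Theorem~\ref{thm-Pei} as a direct specialization of Theorem~\ref{thm-newNBG}, so that the whole argument reduces to computing the idempotents $e_1(x),e_2(x)$ and the associated $q$-polynomials $E_1(x),E_2(x)$ in this particular case. First I would note that $n$ prime with $n\neq p$ gives $(n,q)=1$, so we are in the setting of Section~2. Since $q$ is a primitive root modulo $n$, the subgroup $\langle q\rangle\subseteq(\mathbb{Z}/n\mathbb{Z})^\ast$ is the whole group, hence $\mathrm{Z}_n$ splits into exactly two $q$-classes, $\mathcal{S}_1=\{0\}$ and $\mathcal{S}_2=\{1,2,\dots,n-1\}$; correspondingly $r=2$, $x^n-1=(x-1)p_2(x)$ with $p_2(x)=x^{n-1}+\cdots+x+1$ irreducible in $\mathbb{F}_q[x]$, and the conjugate classes of the $n$-th roots of unity are $\mathcal{A}_1=\{1\}$ and $\mathcal{A}_2=\{\zeta,\zeta^q,\dots,\zeta^{q^{n-2}}\}$, with representatives $\alpha_1=1$ and $\alpha_2=\zeta$.

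Next I would apply Theorem~\ref{thm-idem}. Here $\varepsilon_1(x)=1$ and $\varepsilon_2(x)=x+x^2+\cdots+x^{n-1}$, and using $1+\zeta+\cdots+\zeta^{n-1}=0$ one gets $\varepsilon_2(1)=n-1$ and $\varepsilon_2(\zeta)=-1$, so that
$$\mathbf{M}=\left(\begin{array}{cc} 1 & 1 \\ n-1 & -1\end{array}\right),\qquad \det(\mathbf{M})=-n\neq 0$$
in $\mathbb{F}_q$ because $n\neq p$. Inverting $\mathbf{M}$ and using (\ref{eqn-matrix}) yields $e_1(x)=\frac{1}{n}(1+x+\cdots+x^{n-1})$ and $e_2(x)=1-e_1(x)$; applying the ring isomorphism $\varphi$ gives $E_1(x)=\frac{1}{n}\sum_{i=0}^{n-1}x^{q^i}$ and $E_2(x)=x-E_1(x)$. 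Since $\sum_{i=0}^{n-1}\alpha^{q^i}=\mathrm{Tr}(\alpha)$ for every $\alpha\in\mathbb{F}_{q^n}$, this reads $E_1(\alpha)=\frac{1}{n}\mathrm{Tr}(\alpha)$ and $E_2(\alpha)=\alpha-\frac{1}{n}\mathrm{Tr}(\alpha)$.

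Finally I would feed this into Theorem~\ref{thm-newNBG}: $\alpha$ is a NBG for $\mathbb{F}_{q^n}/\mathbb{F}_q$ if and only if $E_1(\alpha)\neq 0$ and $E_2(\alpha)\neq 0$, i.e.\ if and only if $\mathrm{Tr}(\alpha)\neq 0$ and $n\alpha\neq\mathrm{Tr}(\alpha)$. The one point that needs a word of justification — and the closest thing to an obstacle, though it is entirely elementary — is that the second inequality is equivalent to $\alpha\notin\mathbb{F}_q$: if $\alpha\in\mathbb{F}_q$ then $\mathrm{Tr}(\alpha)=n\alpha$ directly, while conversely $n\alpha=\mathrm{Tr}(\alpha)\in\mathbb{F}_q$ forces $\alpha\in\mathbb{F}_q$ since $n$ is invertible in $\mathbb{F}_q$. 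Combining the two conditions gives exactly the asserted criterion. I expect no genuine difficulty here; the substance is carried entirely by Theorems~\ref{thm-newNBG} and~\ref{thm-idem}, and the only real work is the short explicit computation of $\mathbf{M}$ and its inverse.
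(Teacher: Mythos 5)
Your proposal is correct and follows exactly the route the paper takes in its Example 1: specialize Theorem \ref{thm-newNBG} via Theorem \ref{thm-idem}, compute $\mathbf{M}$ and its inverse, obtain $E_1(\alpha)=\frac{1}{n}\mathrm{Tr}(\alpha)$ and $E_2(\alpha)=\alpha-\frac{1}{n}\mathrm{Tr}(\alpha)$, and translate the nonvanishing conditions into $\mathrm{Tr}(\alpha)\neq 0$ and $\alpha\notin\mathbb{F}_q$. No differences worth noting.
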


\begin{example}
Let $n$ be an odd prime number, $n \neq p, \ q=p^m.$ Suppose that the (multiplicative) order of $q$
in $(\mathbb{Z}/ n\mathbb{Z} )^\ast$ is $l=\frac{\varphi(n)}{2}=\frac{n-1}{2}$ so that in
$(\mathbb{Z}/ n\mathbb{Z} )^\ast$,
$$
\{ q^\lambda: 0 \leq \lambda \leq l-1\}= \{1 \leq r \leq n-1:(\frac{r}{n})=1\},
$$
where $(\frac{r}{n})$ is the Legendre symbol.

Let $\zeta$ be an $n$-th primitive root of 1 in the algebraic closure of $\mathbb{F}_{q}$
so that $\mathbb{F}_{q}(\zeta)=\mathbb{F}_{q^l}.$ The conjugate classes of $\{1,\zeta, \cdots, \zeta^{n-1}\}$
are
\begin{eqnarray*}
\mathcal{A}_1 &=& \{1\}, \ \alpha_1=1, \\
\mathcal{A}_2 &=& \{ \zeta^r: 1 \leq r \leq n-1, (\frac{r}{n})=1 \}, \alpha_2=\zeta, \\
\mathcal{A}_3 &=& \{ \zeta^r: 1 \leq r \leq n-1, (\frac{r}{n})=-1 \}, \alpha_3=\zeta^g,
\end{eqnarray*}
where $g$ is a generator of the cyclic group $(\mathbb{Z}/ n\mathbb{Z} )^\ast$. Therefore

\begin{equation}\label{eqn-exap}
\varepsilon_1(x)=1, \varepsilon_2(x)=\sum^{n-1}_{\scriptstyle r=1 \atop \scriptstyle  (\frac{r}{n})=1  }x^r,
\varepsilon_3(x)=\sum^{n-1}_{\scriptstyle r=1 \atop \scriptstyle  (\frac{r}{n})=-1  }x^r.
\end{equation}

$$\mathbf{M}=\left(
              \begin{array}{ccc}
                \varepsilon_1(1) & \varepsilon_1(\zeta) & \varepsilon_1(\zeta^g) \\
                \varepsilon_2(1) & \varepsilon_2(\zeta) & \varepsilon_2(\zeta^g) \\
               \varepsilon_3(1) & \varepsilon_3(\zeta) & \varepsilon_3(\zeta^g) \\
              \end{array}
            \right)
=\left(
   \begin{array}{ccc}
     1 & 1 & 1 \\
     l & C & B \\
     l & B & C \\
   \end{array}
 \right),
$$
where
$$
C=\sum^{n-1}_{\scriptstyle r=1 \atop \scriptstyle  (\frac{r}{n})=1  }\zeta^r \in \mathbb{F}_{q},
B=\sum^{n-1}_{\scriptstyle r=1 \atop \scriptstyle  (\frac{r}{n})=-1  }\zeta^r =-1-C,
$$
and
$$
\mathbf{M^{-1}}=\frac{1}{n(B-C)}\left(
   \begin{array}{ccc}
     B-C & B-C & B-C \\
     l(B-C) & C-l & l-B \\
     l(B-C) & l-B & C-l \\
   \end{array}
 \right).
$$

From (\ref{eqn-matrix}) we get

\begin{eqnarray}\label{eqn-doub}
  \left \{
\begin{array}{lll}
n e_1(x)=\varepsilon_1(x) + \varepsilon_2(x) + \varepsilon_3(x)=\sum^{n-1}_{i=0}x^i \\
n(B-C)e_2(x)=l(B-C)+(C-l)\varepsilon_2(x) + (l-B)\varepsilon_3(x) \\
n(B-C)e_3(x)=l(B-C)+(l-B)\varepsilon_2(x) + (C-l)\varepsilon_3(x).
\end{array}
\right.
\end{eqnarray}

Case (I): $2 \nmid q$

Let $n^\ast=(\frac{-1}{n})n.$ Then $B-C=-\sum_{r=1}^{n-1}(\frac{r}{n})\zeta^r$
is the quadratic Gauss sum, but valued in $\mathbb{F}_{q^l}.$ We have
\begin{eqnarray*}
(B-C)^2 &=& \sum_{1 \leq r,s \leq n-1}(\frac{r s}{n})\zeta^{r+s} =\sum_{1 \leq t,s \leq n-1}(\frac{t}{n})\zeta^{s(t+1)}\\
&=& -\sum_{t \neq -1}(\frac{t}{n})+(n-1)(\frac{-1}{n})=n^\ast.
\end{eqnarray*}

Therefore $B-C=\mu\sqrt{n^\ast}, \ \mu \in \{1,-1\}.$ Then from $B+C=-1,$ we get
$$
B=\frac{1}{2}(-1+\mu \sqrt{n^\ast}), C=\frac{1}{2}(-1-\mu \sqrt{n^\ast}).
$$

By (\ref{eqn-doub}) and (\ref{eqn-exap}) we have
\begin{eqnarray*}
n e_1(x)&=&\sum^{n-1}_{i=0}x^i, \\
n \mu\sqrt{n^\ast}e_2(x)&=& l\mu\sqrt{n^\ast}+[(-\frac{1}{2}-\frac{\mu \sqrt{n^\ast}}{2})-l]\varepsilon_2(x)+
[l+(\frac{1}{2}-\frac{\mu \sqrt{n^\ast}}{2})]\varepsilon_3(x), \\
&=& l\mu\sqrt{n^\ast}+\frac{n}{2}(\varepsilon_3(x) -\varepsilon_2(x) )-\frac{\mu\sqrt{n^\ast}}{2}(\varepsilon_3(x) +\varepsilon_2(x) )\\
n \mu\sqrt{n^\ast}e_3(x)&=&  l\mu\sqrt{n^\ast}-\frac{n}{2}(\varepsilon_3(x) -\varepsilon_2(x) )-\frac{\mu\sqrt{n^\ast}}{2}(\varepsilon_3(x) +\varepsilon_2(x) ).
\end{eqnarray*}
Then $E_i(x)=\varphi(e_i(x)) \ (1 \leq i \leq 3)$ are
\begin{eqnarray*}
n E_1(x)&=& \sum^{n-1}_{i=0}x^{q^i}, \\
2n\sqrt{n^\ast}E_2(x)&=& 2l\sqrt{n^\ast}x-\mu n \sum^{n-1}_{r=1}(\frac{r}{n})x^{q^r}-\sqrt{n^\ast}\sum^{n-1}_{r=1}x^{q^r},\\
2n\sqrt{n^\ast}E_3(x)&=& 2l\sqrt{n^\ast}x+\mu n \sum^{n-1}_{r=1}(\frac{r}{n})x^{q^r}-\sqrt{n^\ast}\sum^{n-1}_{r=1}x^{q^r}.
\end{eqnarray*}

Let $\mathrm{Tr}$ be the trace mapping for $\mathbb{F}_{q^n} / \mathbb{F}_{q}$.
By Theorem \ref{thm-newNBG} we get, for $\alpha \in \mathbb{F}_{q^n},$
\begin{eqnarray*}
\alpha \ \mbox{is \ a \ NBG \ for} \  \mathbb{F}_{q^n} / \mathbb{F}_{q} &\Leftrightarrow & E_i(\alpha) \neq 0 \ (1 \leq i \leq 3)\\
&\Leftrightarrow & \mathrm{Tr}(\alpha) \neq 0 \  \mbox{and}
\ 2l\sqrt{n^\ast} \alpha-\sqrt{n^\ast}(\mathrm{Tr}(\alpha)-\alpha) \neq \pm n \sum^{n-1}_{r=1}(\frac{r}{n})\alpha^{q^r} \\
&\Leftrightarrow & \mathrm{Tr}(\alpha) \neq 0 \  \mbox{and}
\ n \sqrt{n^\ast} \alpha-\sqrt{n^\ast}\mathrm{Tr}(\alpha) \neq \pm n \sum^{n-1}_{r=1}(\frac{r}{n})\alpha^{q^r}.
\end{eqnarray*}

Case (II): $2 \mid q$.

In this case $B+C=B-C=1$ and by (\ref{eqn-doub})
\begin{eqnarray*}
n e_1(x)&=&\sum^{n-1}_{i=0}x^i, \\
ne_2(x)&=&l+(l+B)(\varepsilon_2(x) +\varepsilon_3(x))+\varepsilon_2(x),\\
ne_3(x)&=&l+(l+B+1)(\varepsilon_2(x) +\varepsilon_3(x))+\varepsilon_2(x).
\end{eqnarray*}

Therefore, for $\alpha \in \mathbb{F}_{q^n},$
\begin{eqnarray*}
n E_1(\alpha)&=& \mathrm{Tr}(\alpha), \\
n E_2(\alpha) &=& l \mathrm{Tr}(\alpha)+B(\mathrm{Tr}(\alpha)+\alpha)+A, \\
n E_3(\alpha) &=& l \mathrm{Tr}(\alpha)+(B+1)(\mathrm{Tr}(\alpha)+\alpha)+A,
\end{eqnarray*}
where $A=\sum^{n-1}\limits_{\scriptstyle r=1 \atop \scriptstyle  (\frac{r}{n})=1  }\alpha^{q^r}. $
Therefore for $ \alpha \in \mathbb{F}_{q^n},$
\begin{eqnarray*}
\alpha \  \mbox{is \ a\ NBG \ for} \ \mathbb{F}_{q^n} / \mathbb{F}_{q}
&  \Leftrightarrow & \mathrm{Tr}(\alpha) \neq 0,
A \neq l\mathrm{Tr}(\alpha)+B(\mathrm{Tr}(\alpha)+\alpha) \\
&&  \mbox{and} \ A \neq l\mathrm{Tr}(\alpha)+(B+1)(\mathrm{Tr}(\alpha)+\alpha).
\end{eqnarray*}

If $n \equiv \pm 1(\bmod 8),$ then $(\frac{2}{n})=1$ and $B^2=\sum^{n-1}\limits_{\scriptstyle r=1 \atop \scriptstyle  (\frac{r}{n})=1 }\zeta^{2r}=B.$
Therefore $ B \in \{ 0,1\}.$ If $n \equiv \pm 3(\bmod 8),$ then $(\frac{2}{n})=-1$ and $B^2=C=B+1.$ Therefore $B \in \{ \omega, \omega+1 \}$
where $\omega$ and $\omega+1$ are two roots of $x^2+x+1$ in $\mathbb{F}_{4}.$
Thus we get the following result.
\end{example}

\begin{theorem}\label{thm-newre1}
Let $n$ be an odd prime number, $n \neq p, \ q=p^m, \ \mathrm{Tr}$ be the trace mapping for
$\mathbb{F}_{q^n} / \mathbb{F}_{q}, \ n^\ast=(\frac{-1}{n})n.$
Suppose that the (multiplicative) order of $q$
in $(\mathbb{Z}/ n\mathbb{Z} )^\ast$ is $l=\frac{\varphi(n)}{2}=\frac{n-1}{2}$.
Then for $ \alpha \in \mathbb{F}_{q^n},$ we have the following criterion

(1) If $p \geq 3, \alpha$ is a NBG for $\mathbb{F}_{q^n} / \mathbb{F}_{q}$ if and only if
$\mathrm{Tr}(\alpha) \neq 0$ and $\sqrt{n^\ast}(n \alpha -\mathrm{Tr}(\alpha)) \neq \pm n\sum^{n-1}\limits_{r=1}(\frac{r}{n})\alpha^{q^r}.$

(2) If $p=2, \ \alpha$ is a NBG for $\mathbb{F}_{q^n} / \mathbb{F}_{q}$ if and only if
$\mathrm{Tr}(\alpha) \neq 0$ and
$$
  \left \{
\begin{array}{ll}
A \neq l \mathrm{Tr}(\alpha), (l+1)\mathrm{Tr}(\alpha)+\alpha, \  \mbox{for} \ n \equiv \pm 1(\bmod 8) \\
A \neq l \mathrm{Tr}(\alpha)+ \omega (\mathrm{Tr}(\alpha)+\alpha), l \mathrm{Tr}(\alpha)+ (\omega+1) (\mathrm{Tr}(\alpha)+\alpha),\  \mbox{for} \ n \equiv \pm 3(\bmod 8),
\end{array}
\right.
$$
where $A=\sum^{n-1}\limits_{\scriptstyle r=1 \atop \scriptstyle  (\frac{r}{n})=1  }\alpha^{q^r} $ and
$\{ \omega,\omega+1 \}$ are two roots of $x^2+x+1$ in $\mathbb{F}_{4}$.

Particularly, if $q=p=2,$ then $\alpha \in \mathbb{F}_{2^n}$ is a NBG for $\mathbb{F}_{2^n} / \mathbb{F}_{2}$ if and only if
$\mathrm{Tr}(\alpha) =1$ and $ \sum^{n-1}\limits_{\scriptstyle r=1 \atop \scriptstyle  (\frac{r}{n})=1} \alpha^{q^r} \neq l, l+1+\alpha,$
where $\mathrm{Tr}$ is the trace mapping for $\mathbb{F}_{2^n} / \mathbb{F}_{2}$.
\end{theorem}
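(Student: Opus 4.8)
The plan is to derive Theorem~\ref{thm-newre1} as the concrete specialization of Theorem~\ref{thm-newNBG}, with the idempotents supplied by Theorem~\ref{thm-idem}, along the lines of the example that precedes the statement (which in fact already carries out most of the computation). First I would determine the $q$-classes of $\mathbb{Z}/n\mathbb{Z}$: since $n$ is prime and the order of $q$ in $(\mathbb{Z}/n\mathbb{Z})^{\ast}$ is $l=(n-1)/2$, the subgroup $\langle q\rangle$ is the unique subgroup of index $2$, i.e.\ the group of quadratic residues. Hence $r=3$, the classes are $\mathcal{S}_1=\{0\}$, $\mathcal{S}_2=\{\,r:(\frac{r}{n})=1\,\}$, $\mathcal{S}_3=\{\,r:(\frac{r}{n})=-1\,\}$, with $\varepsilon_1(x)=1$, $\varepsilon_2(x)=\sum_{(\frac{r}{n})=1}x^{r}$ and $\varepsilon_3(x)=\sum_{(\frac{r}{n})=-1}x^{r}$, and one may take conjugacy-class representatives $\alpha_1=1$, $\alpha_2=\zeta$, $\alpha_3=\zeta^{g}$ for a fixed non-residue $g\bmod n$.

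Next I would assemble the matrix $\mathbf{M}=(\varepsilon_i(\alpha_j))_{1\le i,j\le 3}$. Its only non-elementary entries are $C:=\varepsilon_2(\zeta)=\varepsilon_3(\zeta^{g})$ and $B:=\varepsilon_3(\zeta)=\varepsilon_2(\zeta^{g})$, which satisfy $B+C=\sum_{r=1}^{n-1}\zeta^{r}=-1$, while $B-C=-\sum_{r=1}^{n-1}(\frac{r}{n})\zeta^{r}$ is the quadratic Gauss sum; it lies in $\mathbb{F}_q$ (by the proof of Theorem~\ref{thm-idem} all entries of $\mathbf{M}$ do), and the standard computation gives $(B-C)^{2}=(\frac{-1}{n})n=n^{\ast}$. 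Theorem~\ref{thm-idem} gives $\det\mathbf{M}\ne0$, so inverting $\mathbf{M}$ expresses $e_1,e_2,e_3$ as $\mathbb{F}_q$-linear combinations of $\varepsilon_1,\varepsilon_2,\varepsilon_3$; applying $\varphi$, evaluating at $\alpha$ and using $\sum_{i=0}^{n-1}\alpha^{q^{i}}=\mathrm{Tr}(\alpha)$ then writes each $E_i(\alpha)$ in terms of $\mathrm{Tr}(\alpha)$, $\alpha$ and the partial sum $A:=\sum_{(\frac{r}{n})=1}\alpha^{q^{r}}$ (equivalently the signed sum $\sum_{r=1}^{n-1}(\frac{r}{n})\alpha^{q^{r}}$, since the two half-sums add up to $\mathrm{Tr}(\alpha)-\alpha$). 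By Theorem~\ref{thm-newNBG}, $\alpha$ is a NBG iff $E_1(\alpha),E_2(\alpha),E_3(\alpha)$ are all nonzero; here $E_1(\alpha)=\tfrac{1}{n}\mathrm{Tr}(\alpha)$ already gives the condition $\mathrm{Tr}(\alpha)\ne0$, so it remains only to rewrite $E_2(\alpha)\ne0$ and $E_3(\alpha)\ne0$.

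Here the argument splits on the characteristic. If $p\ge3$, then $2$ is invertible and $B-C=\mu\sqrt{n^{\ast}}$ with $\mu\in\{\pm1\}$, whence $B=\tfrac{1}{2}(-1+\mu\sqrt{n^{\ast}})$, $C=\tfrac{1}{2}(-1-\mu\sqrt{n^{\ast}})$; substituting into the formulas for $E_2(\alpha)$ and $E_3(\alpha)$ and clearing denominators collapses the two inequalities into the single condition $\sqrt{n^{\ast}}(n\alpha-\mathrm{Tr}(\alpha))\ne\pm n\sum_{r=1}^{n-1}(\frac{r}{n})\alpha^{q^{r}}$, the sign $\pm$ absorbing the unknown $\mu$ and the choice of square root of $n^{\ast}$; this is part~(1). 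If $p=2$, then $B-C=B+C=1$, and $B$ is pinned down by $B^{2}=\sum_{(\frac{r}{n})=-1}\zeta^{2r}$, which equals $B$ when $(\frac{2}{n})=1$, i.e.\ $n\equiv\pm1\pmod{8}$ (forcing $B\in\{0,1\}$), and equals $C=B+1$ when $(\frac{2}{n})=-1$, i.e.\ $n\equiv\pm3\pmod{8}$ (forcing $B\in\{\omega,\omega+1\}$, the roots of $x^{2}+x+1$); inserting each admissible value of $B$ into $E_2(\alpha)$ and $E_3(\alpha)$ and requiring both nonzero yields exactly the two unordered inequalities of part~(2). Finally, for $q=p=2$ one has $(\frac{2}{n})=1$ automatically (since then $2\in\langle q\rangle$ is a residue), $\mathrm{Tr}(\alpha)\in\mathbb{F}_2$ so $\mathrm{Tr}(\alpha)\ne0$ means $\mathrm{Tr}(\alpha)=1$, and $\{\,l\mathrm{Tr}(\alpha),\ (l+1)\mathrm{Tr}(\alpha)+\alpha\,\}=\{\,l,\ l+1+\alpha\,\}$, giving the stated specialization.

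I expect every step above to be routine; the one point that takes genuine care — and which I would treat as the crux — is the bookkeeping around the two-fold ambiguities ($\mu$ versus $-\mu$ when $p\ge3$; the two roots of $x^{2}+x+1$ when $p=2$ and $n\equiv\pm3\pmod{8}$; and the choices of $\zeta$ and of the non-residue $g$). Swapping any of these interchanges $E_2$ with $E_3$, whereas Theorem~\ref{thm-newNBG} requires $E_2(\alpha)\ne0$ \emph{and} $E_3(\alpha)\ne0$ simultaneously, so the resulting pair of conditions is symmetric and the criterion is well defined independently of all these choices; making this symmetry explicit is where the argument goes beyond mechanical substitution. It is also worth recording in passing that $n^{\ast}$ is automatically a square in $\mathbb{F}_q$ under the hypothesis (because $B-C\in\mathbb{F}_q$), so the symbol $\sqrt{n^{\ast}}$ in part~(1) is meaningful.
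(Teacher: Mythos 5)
Your proposal is correct and follows essentially the same route as the paper: Example 3.2, which immediately precedes the theorem, carries out exactly this specialization of Theorem \ref{thm-newNBG} via Theorem \ref{thm-idem} — the three $q$-classes, the matrix $\mathbf{M}$ with entries $B,C$ satisfying $B+C=-1$ and $(B-C)^2=n^\ast$, the case split on the characteristic, and the determination of $B$ via $B^2$ and $(\frac{2}{n})$ when $p=2$. Your explicit remarks on the symmetry under the two-fold ambiguities and on why $(\frac{2}{n})=1$ is automatic when $q=p=2$ are correct refinements that the paper leaves implicit.
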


\begin{example}(Generalization of Example 2)
Let $n$ and $p$ be distinct prime numbers, $q=p^m.$ Let $f$ be the order of $q$ in $(\mathbb{Z}/ n\mathbb{Z} )^\ast, \ n-1=ef.$
Then we have a generator of the cyclic group $(\mathbb{Z}/ n\mathbb{Z} )^\ast$ such that $q \equiv g^e(\bmod n)$
and $(\mathbb{Z}/ n\mathbb{Z} )^\ast$ is partitioned into cyclotomic classes
$$
\mathrm{C_i}=\{g^{i+ej}: 0 \leq j \leq f-1 \} \ \ ( 0 \leq i \leq e-1).
$$
Let $\zeta$ be an n-th primitive root of 1, $\mathbb{F}_{q}(\zeta)=\mathbb{F}_{q^f}.$ Then $\{\zeta ^a: 0 \leq a \leq n-1\}$
be partitioned into $e+1 \ \mathbb{F}_{q}$-conjugate classes
\begin{eqnarray*}
\mathcal{A}_\ast &=& \{1\}, \alpha_\ast=1 \\
\mathcal{A}_i &=& \{ \zeta^a: a \in \mathrm{C_i}\}, \ \alpha_i=\zeta^{g^i} \ (0 \leq i \leq e-1).
\end{eqnarray*}

Therefore
$$\varepsilon_\ast(x)=1, \ \varepsilon_i(x) \equiv \sum_{a \in \mathrm{C_i}}x^a(\bmod x^n-1).$$

Let $\varepsilon_i=\varepsilon_i(\zeta) \ (0 \leq i \leq e-1)$. We know that
$\varepsilon_i \in \mathbb{F}_{q}$ and
$$\varepsilon_i (\alpha_j)=\sum_{a \in \mathrm{C_i}}\zeta^{ag^{j}}=\varepsilon_{i+j}.$$
Therefore
$$
\mathbf{M}= \left(
    \begin{array}{ccccc}
      1 & 1 &         1 &        \cdots &            1 \\
      f & \varepsilon_0 & \varepsilon_1 & \cdots & \varepsilon_{e-1} \\
      f & \varepsilon_1 & \varepsilon_2 & \cdots & \varepsilon_0 \\
      \vdots & \vdots    & \vdots &  & \vdots \\
      f & \varepsilon_{e-1} & \varepsilon_0 & \cdots & \varepsilon_{e-2} \\
    \end{array}
  \right).
$$

By using the equality
\begin{eqnarray*}
\sum^{e-1}_{i=0}\varepsilon_i \varepsilon_{i+j}&=&\sum^{e-1}_{i=0}\sum_{a,b \in \mathrm{C_0}}\zeta^{g^i(a+g^j b)}\\
&=&\left \{
\begin{array}{ll}
n-f , \  \mbox{if} \ -1 \in \mathrm{C_j}  \ (\Leftrightarrow j \equiv \frac{e f}{2}(\bmod e))\\
-f, \ \ \mbox{otherwise}.
\end{array}
\right.
\end{eqnarray*}
We can get
$$
\mathbf{M^{-1}}=\frac{1}{n} \left(
    \begin{array}{ccccc}
      1 & 1 & 1 & \cdots &1 \\
      f & \varepsilon_c &\varepsilon_{c+1} & \cdots & \varepsilon_{c-1} \\
      f & \varepsilon_{c+1} & \varepsilon_{c+2} & \cdots & \varepsilon_c \\
      \vdots & \vdots & \vdots &  & \vdots \\
      f & \varepsilon_{c-1} & \varepsilon_c & \cdots & \varepsilon_{c-2} \\
    \end{array}
  \right),
$$
where $c \equiv \frac{ef}{2}(\bmod e),$ namely
$$
c =  \left \{
\begin{array}{ll}
\frac{e}{2}, & \mbox{if} \ 2 \mid e \ \mbox{and} \ 2 \nmid f, \\
0 , & \mbox{otherwise} .
\end{array}
\right.
$$

Therefore for $ \alpha \in \mathbb{F}_{q^n}$,
\begin{eqnarray*}
n E_\ast(\alpha)&=&\mathrm{Tr}(\alpha),\\
n E_j(\alpha)&=& f \alpha+\sum^{e-1}_{i=0}\varepsilon_{i+j}\sum_{a \in \mathrm{C_i}}\alpha^{q^a}  \ (0 \leq j \leq e-1).
\end{eqnarray*}
\end{example}

Thus we get the following result.
\begin{theorem}\label{thm-new3}
Let $n$ and $p$ be distinct prime numbers, $q=p^m.$ Let $f$ be the order of $q$ in $(\mathbb{Z}/ n\mathbb{Z} )^\ast, \ n-1=ef.$
Let $\zeta$ be an n-th primitive root of 1, $\mathbb{F}_{q}(\zeta)=\mathbb{F}_{q^f}.$ We choose $g \in \mathbb{Z}$
such that $(\mathbb{Z}/ n\mathbb{Z} )^\ast = \langle g \rangle$ and $q \equiv g^e(\bmod n)$.
We denote
$$
\mathrm{C_i}=\{g^{i+ej}: 0 \leq j \leq f-1 \} \ \ ( 0 \leq i \leq e-1),
\varepsilon_i=\sum_{a \in \mathrm{C_i}}\zeta^a \in \mathbb{F}_{q} (0 \leq i \leq e-1).
$$

Then for $ \alpha \in \mathbb{F}_{q^n}, \ \alpha$ is a NBG for $\mathbb{F}_{q^n} / \mathbb{F}_{q}$
if and only if $\mathrm{Tr}(\alpha) \neq 0$ and
$$
\sum^{e-1}_{i=0}\varepsilon_{i+j}\sum_{a \in \mathrm{C_i}}\alpha^{q^a} \neq -f\alpha \ (0 \leq j \leq e-1).
$$
\end{theorem}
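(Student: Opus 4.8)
\medskip
\noindent\textbf{Proof proposal.}
The plan is to apply the criterion of Theorem~\ref{thm-newNBG} to $\mathbb{F}_{q^n}/\mathbb{F}_q$, obtaining the idempotents through Theorem~\ref{thm-idem} and then carrying out the computation sketched in Example~3. Since $p$ and $n$ are distinct primes, $p\nmid n$, so $(n,q)=1$ --- hence Theorems~\ref{thm-newNBG} and \ref{thm-idem} apply --- and $n$ is a unit in $\mathbb{F}_q$; both facts are used freely below.

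\emph{Step 1: the $q$-classes and the matrix $\mathbf{M}$.} Because $q\equiv g^e\pmod n$, the Frobenius $x\mapsto x^q$ acts on $\{\zeta^a:0\le a\le n-1\}$ as multiplication by $g^e$ on $\mathrm{Z}_n$, which fixes $0$ and stabilizes each cyclotomic class $\mathrm{C}_i=g^i\langle g^e\rangle$. Thus the $\mathbb{F}_q$-conjugate classes of the roots of $x^n-1$ are $\mathcal{A}_\ast=\{1\}$ and $\mathcal{A}_i=\{\zeta^a:a\in\mathrm{C}_i\}$ $(0\le i\le e-1)$, so $r=e+1$, with representatives $\alpha_\ast=1$ and $\alpha_j=\zeta^{g^j}$. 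The same stability shows $\varepsilon_i=\sum_{a\in\mathrm{C}_i}\zeta^a$ is Frobenius-fixed, hence lies in $\mathbb{F}_q$; and since $\{g^j a:a\in\mathrm{C}_i\}=\mathrm{C}_{i+j}$, substitution gives $\varepsilon_i(\alpha_j)=\sum_{a\in\mathrm{C}_i}\zeta^{g^j a}=\varepsilon_{i+j}$ with subscripts read modulo $e$. This realizes $\mathbf{M}$ in the displayed form: a border row and column of $1$'s and $f$'s around the circulant block $(\varepsilon_{i+j})$.

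\emph{Step 2: inverting $\mathbf{M}$ --- the crux.} By Theorem~\ref{thm-idem}, $\mathbf{M}$ is invertible and $(e_1(x),\dots,e_{e+1}(x))^{\mathsf{T}}=\mathbf{M}^{-1}(\varepsilon_1(x),\dots,\varepsilon_{e+1}(x))^{\mathsf{T}}$, so it remains only to compute $\mathbf{M}^{-1}$, which I would do by exhibiting the candidate $\mathbf{N}$ of Example~3 and verifying $\mathbf{M}\mathbf{N}=\mathbf{I}_{e+1}$ entrywise. Using the easy identities $\sum_{i=0}^{e-1}\varepsilon_i=\sum_{a=1}^{n-1}\zeta^a=-1$ and $1+ef=n$, every entry of $\mathbf{M}\mathbf{N}$ reduces to an instance of the quadratic (Gauss-sum) identity
\[
\sum_{i=0}^{e-1}\varepsilon_i\varepsilon_{i+j}
=\sum_{c\in\mathrm{C}_j}\ \sum_{a\in(\mathbb{Z}/n\mathbb{Z})^\ast}\zeta^{a(1+c)}
=\begin{cases} n-f,& -1\in\mathrm{C}_j,\\ -f,& \text{otherwise},\end{cases}
\]
whose first equality follows by rewriting the double sum over $i$ and $a\in\mathrm{C}_i$ as a single sum over $a\in(\mathbb{Z}/n\mathbb{Z})^\ast$, under which the constraint $b\in\mathrm{C}_{i+j}$ turns into $ba^{-1}\in\mathrm{C}_j$ (as $\mathrm{C}_{i+j}=\mathrm{C}_i\mathrm{C}_j$), and whose second follows by evaluating the inner sum ($n-1$ if $1+c\equiv0$, else $-1$) and isolating the single $c=-1$. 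Since $-1=g^{(n-1)/2}=g^{ef/2}$, one has $-1\in\mathrm{C}_j\Leftrightarrow j\equiv \tfrac{ef}{2}\pmod e$, which forces the shift index $c$ appearing in $\mathbf{N}$ and makes the entrywise check go through. I expect this Gauss-sum evaluation to be the one genuinely nontrivial point; everything else is routine bookkeeping with indices mod $e$.

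\emph{Step 3: reading off the criterion.} Applying $\varphi$ to the rows of $\mathbf{M}^{-1}(\varepsilon_i(x))^{\mathsf{T}}$ and evaluating at $\alpha$ (with $\alpha^{q^n}=\alpha$): the first row gives $nE_\ast(\alpha)=\sum_{a=0}^{n-1}\alpha^{q^a}=\mathrm{Tr}(\alpha)$, and the remaining rows give $nE_j(\alpha)=f\alpha+\sum_{i=0}^{e-1}\varepsilon_{c+i+j}\sum_{a\in\mathrm{C}_i}\alpha^{q^a}$; relabelling $j$ by $j-c$ (indices mod $e$) absorbs the constant $c$. Theorem~\ref{thm-newNBG} then says $\alpha$ is an NBG for $\mathbb{F}_{q^n}/\mathbb{F}_q$ iff $E_\ast(\alpha)\neq0$ and $E_j(\alpha)\neq0$ for $0\le j\le e-1$; dividing by the unit $n$ turns this into $\mathrm{Tr}(\alpha)\neq0$ together with $\sum_{i=0}^{e-1}\varepsilon_{i+j}\sum_{a\in\mathrm{C}_i}\alpha^{q^a}\neq-f\alpha$ for $0\le j\le e-1$, which is the asserted criterion.
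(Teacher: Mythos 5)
Your proposal is correct and follows essentially the same route as the paper's Example 3: identify the $\mathbb{F}_q$-conjugate classes with the cyclotomic classes $\mathrm{C}_i$, form the bordered circulant $\mathbf{M}=(\varepsilon_{i+j})$, invert it via the Gauss-period identity $\sum_{i}\varepsilon_i\varepsilon_{i+j}=n-f$ or $-f$ according as $-1\in\mathrm{C}_j$ or not, and read off the criterion from Theorem~\ref{thm-newNBG}. You even make explicit two points the paper leaves implicit (the entrywise check $\mathbf{M}\mathbf{N}=\mathbf{I}_{e+1}$ and the harmless relabelling that absorbs the shift $c\equiv ef/2\pmod e$), so no changes are needed.
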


\begin{remark}
(1) The Gauss periods $\varepsilon_i \ (0 \leq i \leq e-1)$ can be calculated explicitly for many cases of
$e \ (=2,3,4,\cdots)$ by using Gauss sums so that more closer form of Theorem \ref{thm-new3} can be derived for such $e$.
Particularly, for $e=1$ and 2, we obtain Theorems \ref{thm-Pei} and \ref{thm-newre1}.

(2 ) For $q=2, e=3,5,7$ and $q=4, e=3,$ we have $\varepsilon_i \in \mathbb{F}_{2}$ and
$$
\sum^{e-1}_{i=0}\varepsilon_{i}\varepsilon_{i+j} =  \left \{
\begin{array}{ll}
1, & \mbox{if} \ j=0, \\
0 & \mbox{if} \ 1 \leq j \leq e-1,
\end{array}
\right.
$$

which means that the matrix

$$
\left(
  \begin{array}{cccc}
   \varepsilon_0 &\varepsilon_1 & \cdots & \varepsilon_{e-1} \\
    \varepsilon_1 &\varepsilon_2 &\cdots & \varepsilon_0 \\
    \vdots & \vdots &  &\vdots \\
   \varepsilon_{e-1} & \varepsilon_0 &\cdots &\varepsilon_{e-2} \\
  \end{array}
\right)
$$
is an orthogonal circulate matrix over $\mathbb{F}_{2}$.
Jungnickel et al. \cite{JBG} obtained a formula on the number of orthogonal circulate $e \times e$ matrices over $\mathbb{F}_{q}$.
From this formula we know that there essentially exist unique such matrix for $q=2,e=3,5,7$ and $q=4,e=3.$ Namely,
$(\varepsilon_0,\varepsilon_1,\cdots,\varepsilon_{e-1})=(1,0,\cdots,0).$

In these cases, the conclusion of Theorem \ref{thm-new3} can be simplified as :

$\alpha$ is a NBG for $\mathbb{F}_{q^n} / \mathbb{F}_{q}$ if and only if
$\mathrm{Tr}(\alpha) \neq 0$ and
$$
\sum^{e-1}_{i=0}\varepsilon_{i+j}\sum_{a \in \mathrm{C_i}}\alpha^{q^a} \neq -f\alpha \ (0 \leq j \leq e-1).
$$
\end{remark}

\begin{example}
Let $p_1, p_2,p$ be distinct prime numbers, $p_1 \geq 3, p_2 \geq 3, n=p_1p_2, q=p^m.$
Suppose that $(\mathbb{Z}/ p_1 \mathbb{Z} )^\ast =\langle q \rangle, (\mathbb{Z}/ p_2 \mathbb{Z} )^\ast =\langle q \rangle$
and $(p_1-1,p_2 -1)=2.$ Then the order of $q$ in $ (\mathbb{Z}/ n \mathbb{Z} )^\ast $ is $f=\frac{(p_1-1)(p_2 -1)}{2}.$
Let $\zeta$ be a n-th root of 1 and $\mathbb{F}_{q}(\zeta)=\mathbb{F}_{q^f}.$ The set $\{ \zeta^i :0 \leq i \leq n-1\}$
be partitioned into five $\mathbb{F}_{q}$-conjugate classes as following:
\begin{eqnarray*}
\mathcal{A}_0 &=& \{1\}, \  |\mathcal{A}_0|=1, \ \alpha_0=1, \\
\mathcal{A}_1 &=&  \{ \zeta, \zeta^q, \cdots, \zeta^{q^{f-1}}\}, \ |\mathcal{A}_1|=f, \ \alpha_1=\zeta, \\
\mathcal{A}_2 &=&  \{ \zeta^g, \zeta^{qg}, \cdots, \zeta^{q^{f-1}g}\}, \ |\mathcal{A}_2|=f, \ \alpha_2=\zeta^g (\mbox{where}\  \zeta^g \notin \mathcal{A}_1 \ \mbox{and} \ (g,n)=1), \\
\mathcal{A}_3 &=&  \{ \zeta^{p_1}, \zeta^{qp_1}, \cdots, \zeta^{q^{p_2 -2}p_1}\}, \ |\mathcal{A}_3|=p_2 -1, \ \alpha_3=\zeta^{p_1},\\
\mathcal{A}_4 &=&  \{ \zeta^{p_2}, \zeta^{qp_2}, \cdots, \zeta^{q^{p_1 -2}p_2}\}, \ |\mathcal{A}_4|=p_1 -1, \ \alpha_4=\zeta^{p_2}.
\end{eqnarray*}

Therefore
\begin{eqnarray*}
\varepsilon_0(x)&=& 1, \varepsilon_1(x)=\sum^{f-1}_{i=0}x^{q^i}, \varepsilon_2(x)=\sum^{f-1}_{i=0}x^{g q^i}, \\
\varepsilon_3(x)&=& \sum^{p_2-2}_{j=0}x^{q^j p_1}, \varepsilon_4(x)=\sum^{p_1-2}_{j=0}x^{q^j p_2}.
\end{eqnarray*}
$$
\mathcal{M}=(\varepsilon_i(\alpha_j))_{0 \leq i,j \leq 4}=\left(
                                                            \begin{array}{ccccc}
                                                              1 & 1 & 1 & 1 & 1 \\
                                                              f & \varepsilon_1 & \varepsilon_2 & -\frac{p_1 -1}{2} & -\frac{p_2 -1}{2} \\
                                                             f & \varepsilon_2 & \varepsilon_1 & -\frac{p_1 -1}{2} & -\frac{p_2 -1}{2} \\
                                                              p_2 -1 & -1 & -1 & -1 & p_2 -1 \\
                                                              p_1 -1 & -1 & -1 & p_1 -1 & -1 \\
                                                            \end{array}
                                                          \right),
$$
where $\varepsilon_i=\varepsilon_i(\alpha_j) \in \mathbb{F}_{q} \ (i=1,2).$

If $ 2 \nmid q,$ then
$$n \mathcal{M}^{-1}=\left(
                       \begin{array}{ccccc}
                         1 & 1 & 1 & 1 & 1 \\
                         2m_1m_2 & \frac{n}{2(\varepsilon_1 -\varepsilon_2)}+\frac{1}{2} & -\frac{n}{2(\varepsilon_1 -\varepsilon_2)}+\frac{1}{2} & -m_1 & -m_2 \\
                         2m_1m_2 & -\frac{n}{2(\varepsilon_1 -\varepsilon_2)}+\frac{1}{2} & \frac{n}{2(\varepsilon_1 -\varepsilon_2)}+\frac{1}{2} & -m_1 & -m_2 \\
                         2m_2 & -1 & -1 & -1 & 2m_2 \\
                         2m_1 & -1 & -1 & 2m_1 & -1 \\
                       \end{array}
                     \right),
$$
where $m_i=\frac{p_i -1}{2} \ (i=1,2).$ Therefore
\begin{eqnarray*}
n e_0(x)&=&\sum^{n-1}_{i=0}x^{i}, \\
n e_1(x)&=& 2m_1m_2+\frac{n}{2(\varepsilon_1 -\varepsilon_2)}(\varepsilon_1(x)-\varepsilon_2(x))+\frac{1}{2}(\varepsilon_1(x)+\varepsilon_2(x))-m_1\varepsilon_3(x)-m_2 \varepsilon_4(x),\\
n e_2(x)&=& 2m_1m_2-\frac{n}{2(\varepsilon_1 -\varepsilon_2)}(\varepsilon_1(x)-\varepsilon_2(x))+\frac{1}{2}(\varepsilon_1(x)+\varepsilon_2(x))-m_1\varepsilon_3(x)-m_2 \varepsilon_4(x),\\
n e_3(x)&=&- \sum^{n-1}_{i=0}x^{i}+p_2(1+\varepsilon_4(x)),\\
n e_4(x)&=&- \sum^{n-1}_{i=0}x^{i}+p_1(1+\varepsilon_3(x)).
\end{eqnarray*}

For $\alpha \in \mathbb{F}_{q^n}$ and $m | n,$ let $\mathrm{Tr}_m^n(\alpha)$ be the trace of $\alpha$
for extension $\mathbb{F}_{q^n} / \mathbb{F}_{q^m}.$ We have, for $E_i(x)=\varphi(e_i(x)),$
\begin{eqnarray*}
n E_0(\alpha) &=& \mathrm{Tr}^n_1(\alpha),\\
n E_1(\alpha) &=& 2m_1m_2 \alpha+\frac{1}{2}\sum^{n-1}_{\scriptstyle r=1 \atop \scriptstyle  (r,n)=1}\alpha^{q^r}-m_1(\mathrm{Tr}_{p_1}^n(\alpha)-\alpha)
                - m_2(\mathrm{Tr}_{p_2}^n(\alpha)-\alpha)\\
                &&+\frac{n}{2(\varepsilon_1 - \varepsilon_2)}(\sum^{f-1}_{r=0}\alpha^{q^{q^r}}-\sum^{f-1}_{r=0}\alpha^{q^{g q^r}}),\\
n E_2(\alpha) &=& 2m_1m_2 \alpha+\frac{1}{2}\sum^{n-1}_{\scriptstyle r=1 \atop \scriptstyle  (r,n)=1}\alpha^{q^r}-m_1(\mathrm{Tr}_{p_1}^n(\alpha)-\alpha)
                - m_2(\mathrm{Tr}_{p_2}^n(\alpha)-\alpha)\\
                &&-\frac{n}{2(\varepsilon_1 - \varepsilon_2)}(\sum^{f-1}_{r=0}\alpha^{q^{q^r}}-\sum^{f-1}_{r=0}\alpha^{q^{g q^r}}),\\
n E_3(\alpha) &=& - \mathrm{Tr}^n_1(\alpha)+p_2 \mathrm{Tr}_{p_1}^n(\alpha), \\
n E_4(\alpha) &=& - \mathrm{Tr}^n_1(\alpha)+p_1 \mathrm{Tr}_{p_2}^n(\alpha).
\end{eqnarray*}

If $2 \mid q,$ then
$$
\mathcal{M}^{-1}=\left(
                   \begin{array}{ccccc}
                     1 & 1 & 1 & 1 & 1 \\
                     0 & \frac{n+1}{2}+\varepsilon_2 & \frac{n+1}{2}+\varepsilon_1 & m_1 & m_2 \\
                     0 & \frac{n+1}{2}+\varepsilon_1 & \frac{n+1}{2}+\varepsilon_2 & m_1 & m_2 \\
                     0 & 1 & 1 & 1 & 0 \\
                     0 & 1 & 1 & 0 & 1 \\
                   \end{array}
                 \right)
$$
and
\begin{eqnarray*}
E_0(\alpha)&=&\mathrm{Tr}_1^n(\alpha) \\
E_1(\alpha)&=& m_1(\mathrm{Tr}_{p_1}^n(\alpha)+\alpha)+m_2 (\mathrm{Tr}_{p_2}^n(\alpha)+\alpha)+\sum^{f-1}_{r=0}\alpha^{q^{q^r}}\\
E_2(\alpha)&=& m_1(\mathrm{Tr}_{p_1}^n(\alpha)+\alpha)+m_2 (\mathrm{Tr}_{p_2}^n(\alpha)+\alpha)+\sum^{f-1}_{r=0}\alpha^{q^{q^r}g}\\
E_3(\alpha)&=& \mathrm{Tr}_{p_1}^n(\alpha), \ \ \ E_4(\alpha)= \mathrm{Tr}_{p_2}^n(\alpha).
\end{eqnarray*}
\end{example}

Then Theorem \ref{thm-newNBG} implies the following result.

\begin{theorem}\label{thm-last}
Let $p_1, p_2,p$ be distinct prime numbers, $p_1 \geq 3, p_2 \geq 3, n=p_1p_2, q=p^m, f=\frac{(p_1-1)(p_2 -1)}{2},
m_1=\frac{p_1 -1}{2}, m_2=\frac{p_2 -1}{2}.$ Suppose that $(\mathbb{Z}/ p_i \mathbb{Z} )^\ast =\langle q \rangle \ (i=1,2)$
and $(p_1-1,p_2 -1)=2.$ We choose $g \in \mathbb{Z}$ such that $ g \notin  \langle q \rangle \subseteq (\mathbb{Z}/ n \mathbb{Z} )^\ast.$
Let $\zeta$ be an n-th root of 1 in the algebraic closure of $\mathbb{F}_{q}, \varepsilon_1=\sum^{f-1}_{r=0}\zeta^{q^i},\varepsilon_2=\sum^{f-1}_{r=0}\zeta^{g q^i}=1-\varepsilon_1.$ For $\alpha \in \mathbb{F}_{q^n}$ and $m \mid n,$ let $\mathrm{Tr}_m^n(\alpha)$ be the trace mapping of $\alpha$ for
$\mathbb{F}_{q^n}/ \mathbb{F}_{q^m}.$

(1) If $2\nmid q,$ then $\alpha \in \mathbb{F}_{q^n}$ is a NBG for $\mathbb{F}_{q^n}/ \mathbb{F}_{q}$ if and only if
$$
\mathrm{Tr}^n_1(\alpha) \neq 0, \  p_1\mathrm{Tr}^{n}_{p_2}(\alpha), \ p_2\mathrm{Tr}^{n}_{p_1}(\alpha)
$$
and
\begin{eqnarray*}
&&      2m_1m_2 \alpha+\frac{1}{2}\sum^{n-1}_{\scriptstyle r=1 \atop \scriptstyle  (r,n)=1}\alpha^{q^r}
                -m_1(\mathrm{Tr}_{p_1}^n(\alpha)-\alpha)
                - m_2(\mathrm{Tr}_{p_2}^n(\alpha)-\alpha)\\
& \neq & \pm \frac{n}{2(\varepsilon_1 - \varepsilon_2)}(\sum^{f-1}_{r=0}\alpha^{q^r}-\sum^{f-1}_{r=0}\alpha^{g q^r}).
\end{eqnarray*}

(2) If $2  \mid  q$, then $\alpha \in \mathbb{F}_{q^n}$ is a NBG for $\mathbb{F}_{q^n}/ \mathbb{F}_{q}$ if and only if
$$
\mathrm{Tr}^n_1(\alpha) \neq 0, \ \ \ \alpha \notin \mathbb{F}_{q^{p_i}} \ \ (i=1,2)
$$
and
$$
m_1(\mathrm{Tr}_{p_1}^n(\alpha)+\alpha)
                + m_2(\mathrm{Tr}_{p_2}^n(\alpha)+\alpha) \neq \sum^{f-1}_{r=0}\alpha^{q^{q^r c}} \ (c=1,g).
$$
\end{theorem}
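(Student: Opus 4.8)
The plan is to obtain Theorem \ref{thm-last} as the specialization of Theorem \ref{thm-newNBG} to the case $n=p_1p_2$, feeding in the explicit idempotents computed in the Example immediately preceding it. By Theorem \ref{thm-newNBG}, an element $\alpha\in\mathbb{F}_{q^n}$ is a NBG for $\mathbb{F}_{q^n}/\mathbb{F}_q$ if and only if $E_i(\alpha)\neq 0$ for each of the five $\mathbb{F}_q$-conjugate classes $\mathcal{A}_0,\mathcal{A}_1,\mathcal{A}_2,\mathcal{A}_3,\mathcal{A}_4$ of $\{\zeta^a\}$ listed there. Hence the whole argument consists in rewriting these five inequalities in terms of the trace maps $\mathrm{Tr}^n_m$, using the closed forms for $E_i(\alpha)$ already obtained in the Example (in the two parities $2\nmid q$ and $2\mid q$ separately).

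First I would handle $E_0$: from $nE_0(\alpha)=\mathrm{Tr}^n_1(\alpha)$ and $\gcd(n,p)=1$, the condition $E_0(\alpha)\neq 0$ is precisely $\mathrm{Tr}^n_1(\alpha)\neq 0$. The step that actually produces the compact form of the theorem is the treatment of $E_1$ and $E_2$ together: in odd characteristic their expressions agree except for the sign of the term $\frac{n}{2(\varepsilon_1-\varepsilon_2)}\bigl(\sum_{r=0}^{f-1}\alpha^{q^{q^r}}-\sum_{r=0}^{f-1}\alpha^{gq^{q^r}}\bigr)$, so ``$E_1(\alpha)\neq 0$ and $E_2(\alpha)\neq 0$'' is equivalent to asking that the common part differ from both the $+$ and the $-$ version of that term, which is exactly the single ``$\neq\pm$'' inequality of part (1); in even characteristic the same two expressions differ only by the interchange $\varepsilon_1\leftrightarrow\varepsilon_2$, and ``$E_1(\alpha)\neq 0$ and $E_2(\alpha)\neq 0$'' packages into the pair of conditions indexed by $c=1,g$ in part (2). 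Finally, for $E_3$ and $E_4$: in odd characteristic $nE_3(\alpha)=-\mathrm{Tr}^n_1(\alpha)+p_2\mathrm{Tr}^n_{p_1}(\alpha)$ and $nE_4(\alpha)=-\mathrm{Tr}^n_1(\alpha)+p_1\mathrm{Tr}^n_{p_2}(\alpha)$, so the two nonvanishing conditions become $\mathrm{Tr}^n_1(\alpha)\neq p_2\mathrm{Tr}^n_{p_1}(\alpha)$ and $\mathrm{Tr}^n_1(\alpha)\neq p_1\mathrm{Tr}^n_{p_2}(\alpha)$; in even characteristic one simplifies $E_3(\alpha)$ and $E_4(\alpha)$ using transitivity $\mathrm{Tr}^n_1=\mathrm{Tr}^{p_i}_1\circ\mathrm{Tr}^n_{p_i}$ and the fact that the odd integers $n,p_1,p_2$ all equal $1$ in $\mathbb{F}_q$ when $p=2$, and then recognizes the resulting conditions as the subfield exclusions $\alpha\notin\mathbb{F}_{q^{p_i}}$ $(i=1,2)$. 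Collecting the five rewritten conditions gives exactly parts (1) and (2).

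I expect the only real obstacle to be this last even-characteristic reduction: one must keep track of which of $p_1,p_2$ is attached to $\mathcal{A}_3$ and which to $\mathcal{A}_4$ (equivalently, which $\zeta^{p_i}$ is a primitive $p_j$-th root of unity), and verify carefully that after simplification the conditions $E_3(\alpha)\neq 0$, $E_4(\alpha)\neq 0$ are genuinely the membership conditions stated. Everything else is routine substitution from the Example, plus the observation that a conjunction of five nonvanishing conditions, two of which share a common body, is what collapses to the concise criterion; no ingredient beyond Theorem \ref{thm-newNBG} and the Example's computation of $\mathbf{M}^{-1}$ is required.
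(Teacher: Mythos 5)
Your overall strategy is exactly the paper's: Theorem \ref{thm-last} is read off from Theorem \ref{thm-newNBG} by substituting the five expressions for $E_i(\alpha)$ computed in the preceding Example, and your treatment of $E_0$, $E_1$, $E_2$ (the ``$\neq\pm$'' packaging in odd characteristic, the pair indexed by $c=1,g$ in characteristic $2$) is the same as the paper's. The genuine problem sits precisely at the step you flag as the ``only real obstacle'': the passage from $E_3(\alpha)\neq 0$, $E_4(\alpha)\neq 0$ to the stated trace/subfield conditions. That step cannot be ``verified carefully'' --- it fails. Since $e_0+e_3$ is the idempotent cutting out $\mathbb{F}_q[x]/(x^{p_2}-1)$, one has $E_0(\alpha)+E_3(\alpha)=\frac{1}{p_1}\mathrm{Tr}^n_{p_2}(\alpha)$, hence $nE_3(\alpha)=p_2\mathrm{Tr}^n_{p_2}(\alpha)-\mathrm{Tr}^n_1(\alpha)$; the Example's $nE_3(\alpha)=-\mathrm{Tr}^n_1(\alpha)+p_2\mathrm{Tr}^n_{p_1}(\alpha)$, which you quote, carries the wrong trace subscript (compare the Example's own line $ne_3(x)=-\sum_i x^i+p_2(1+\varepsilon_4(x))$, where $\varphi(1+\varepsilon_4)(\alpha)=\sum_{j=0}^{p_1-1}\alpha^{q^{jp_2}}=\mathrm{Tr}^n_{p_2}(\alpha)$). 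Writing $\beta=\mathrm{Tr}^n_{p_2}(\alpha)$, the condition $E_3(\alpha)\neq 0$ becomes $p_2\beta\neq\mathrm{Tr}^{p_2}_1(\beta)$, i.e.\ $\mathrm{Tr}^n_{p_2}(\alpha)\notin\mathbb{F}_q$, and symmetrically $E_4(\alpha)\neq 0$ becomes $\mathrm{Tr}^n_{p_1}(\alpha)\notin\mathbb{F}_q$. In part (1) the correct inequalities are therefore $\mathrm{Tr}^n_1(\alpha)\neq p_2\mathrm{Tr}^n_{p_2}(\alpha)$ and $\mathrm{Tr}^n_1(\alpha)\neq p_1\mathrm{Tr}^n_{p_1}(\alpha)$, which are different hyperplane conditions from the mixed-index ones in the statement.

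The characteristic-$2$ identification with subfield exclusions fails more seriously. Writing $V_i$ for the isotypic component attached to $\mathcal{A}_i$, one has $\{\alpha:E_3(\alpha)=0\}=V_0\oplus V_1\oplus V_2\oplus V_4$, a subspace of codimension $p_2-1$, whereas $\mathbb{F}_{q^{p_1}}=V_0\oplus V_4$ has dimension only $p_1$; since $\dim V_1=\dim V_2=f\geq 2$, the condition ``$\alpha\notin\mathbb{F}_{q^{p_1}}$'' is strictly weaker than ``$E_3(\alpha)\neq 0$'', and the remaining conditions do not close the gap: if $\beta$ is any NBG and $\alpha=\beta+E_3(\beta)$, then $E_0(\alpha),E_1(\alpha),E_2(\alpha),E_4(\alpha)$ are all nonzero, $\alpha$ lies in neither $\mathbb{F}_{q^{p_1}}$ nor $\mathbb{F}_{q^{p_2}}$, so $\alpha$ satisfies every condition of part (2), yet $E_3(\alpha)=0$ and $\alpha$ is not a NBG. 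So the statement as written cannot be proved by your outline (or by any other); the repair is to replace the third and fourth conditions in both parts by $\mathrm{Tr}^n_{p_1}(\alpha)\notin\mathbb{F}_q$ and $\mathrm{Tr}^n_{p_2}(\alpha)\notin\mathbb{F}_q$ (consistently with Theorems \ref{thm-redu} and \ref{thm-Pei} applied to the two intermediate extensions), after which your derivation of the remaining conditions goes through verbatim.
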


At the end of this section we show that the case $p \mid n$ can be reduced into the case $p \nmid n.$

\begin{theorem}\label{thm-redu}
Let $n=p^t l, (l,p)=1, t \geq 1, q=p^m $ and $\mathrm{Tr}^{n}_{l}$ be the trace mapping for $\mathbb{F}_{q^n}/ \mathbb{F}_{q^l}.$
Then for $\alpha \in \mathbb{F}_{q^n}, \alpha$ is a NBG for $\mathbb{F}_{q^n}/ \mathbb{F}_{q}$ if and only if
$\mathrm{Tr}^{n}_{l}(\alpha)$ is a NBG for $\mathbb{F}_{q^l}/ \mathbb{F}_{q}.$
\end{theorem}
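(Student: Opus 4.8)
The plan is to translate everything into the $q$-polynomial language of Section~2 and to reduce the claim to a one-line divisibility computation, using only Theorem~\ref{thm-oddNBG}(1).

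\textbf{Step 1: the relevant factorizations.} Since $\mathbb{F}_q$ has characteristic $p$ and $n=p^tl$, in $\mathbb{F}_q[x]$ we have $x^n-1=(x^l-1)^{p^t}$. Write $h(x)=x^l-1$ and $e=p^t$, so $x^n-1=h(x)^e$; because $(l,p)=1$, the polynomial $h$ is squarefree, say $h=h_1\cdots h_s$ with $h_i$ distinct monic irreducible in $\mathbb{F}_q[x]$. For $\beta\in\mathbb{F}_{q^n}$ put $m_\beta(x)=\varphi^{-1}(M_\beta(x))\in\mathbb{F}_q[x]$, the monic polynomial with $\varphi(m_\beta)=M_\beta$; then $m_\beta\mid x^n-1=h^e$, and by Theorem~\ref{thm-oddNBG}(1), $\beta$ is a NBG for $\mathbb{F}_{q^n}/\mathbb{F}_q$ if and only if $m_\beta=h^e$. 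Likewise, any $\beta\in\mathbb{F}_{q^l}$ satisfies $m_\beta\mid h$ (since $\beta^{q^l}=\beta$), and applying Theorem~\ref{thm-oddNBG}(1) to the extension $\mathbb{F}_{q^l}/\mathbb{F}_q$, such a $\beta$ is a NBG for $\mathbb{F}_{q^l}/\mathbb{F}_q$ if and only if $m_\beta=h$.

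\textbf{Step 2: the trace as a $q$-polynomial.} Since $[\mathbb{F}_{q^n}:\mathbb{F}_{q^l}]=n/l=p^t=e$, for $\alpha\in\mathbb{F}_{q^n}$ we have
$$\mathrm{Tr}^n_l(\alpha)=\sum_{i=0}^{e-1}\alpha^{q^{li}}=\varphi\!\left(1+x^l+x^{2l}+\cdots+x^{(e-1)l}\right)(\alpha)=\varphi\!\left(h(x)^{e-1}\right)(\alpha),$$
because $1+x^l+\cdots+x^{(e-1)l}=\tfrac{x^n-1}{x^l-1}=h^{e-1}$. In particular $\varphi(h)\bigl(\mathrm{Tr}^n_l(\alpha)\bigr)=\varphi(h^e)(\alpha)=(x^{q^n}-x)(\alpha)=0$, which re-confirms $\gamma:=\mathrm{Tr}^n_l(\alpha)\in\mathbb{F}_{q^l}$. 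Now I would use the elementary fact about the action $f(x)\cdot\beta:=\varphi(f)(\beta)$ of $\mathbb{F}_q[x]$ on $\mathbb{F}_{q^n}$: for any $g\in\mathbb{F}_q[x]$, the minimal $q$-polynomial of $g\cdot\alpha$ corresponds under $\varphi^{-1}$ to $m_\alpha/\gcd(m_\alpha,g)$ — immediate, since $\varphi(f)(g\cdot\alpha)=\varphi(fg)(\alpha)=0$ iff $m_\alpha\mid fg$ iff $\bigl(m_\alpha/\gcd(m_\alpha,g)\bigr)\mid f$. Taking $g=h^{e-1}$ gives
$$m_\gamma=\frac{m_\alpha}{\gcd\!\left(m_\alpha,\;h^{e-1}\right)}.$$

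\textbf{Step 3: exponent bookkeeping and conclusion.} Write $m_\alpha=\prod_{i=1}^s h_i^{b_i}$ with $0\le b_i\le e$ (possible by Step~1). The last identity gives $m_\gamma=\prod_{i=1}^s h_i^{\max(b_i-(e-1),\,0)}$, so $m_\gamma=h=\prod_i h_i$ if and only if $\max(b_i-(e-1),0)=1$ for every $i$, i.e.\ iff $b_i=e$ for every $i$, i.e.\ iff $m_\alpha=h^e=x^n-1$. Combining with the two criteria of Step~1: $\alpha$ is a NBG for $\mathbb{F}_{q^n}/\mathbb{F}_q$ $\iff m_\alpha=x^n-1 \iff m_\gamma=x^l-1 \iff \mathrm{Tr}^n_l(\alpha)$ is a NBG for $\mathbb{F}_{q^l}/\mathbb{F}_q$, which is the assertion. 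The argument is formal once the characteristic-$p$ factorization $x^n-1=(x^l-1)^{p^t}$ and the identification $\mathrm{Tr}^n_l=\varphi\bigl((x^l-1)^{p^t-1}\bigr)(\cdot)$ are in hand; the only mildly technical point, which I would state carefully, is the behaviour $m_\gamma=m_\alpha/\gcd(m_\alpha,h^{e-1})$ of the minimal $q$-polynomial under this multiplication, resting on the ring isomorphism $\varphi$ and unique factorization in $\mathbb{F}_q[x]$.
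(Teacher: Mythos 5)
Your proof is correct; every step checks out, including the key lemma that the minimal $q$-polynomial of $\varphi(g)(\alpha)$ corresponds to $m_\alpha/\gcd(m_\alpha,g)$, and the exponent bookkeeping in Step 3 is right ($b_i-\min(b_i,e-1)=1$ forces $b_i=e$ since $b_i\le e$). The route is genuinely different from the paper's in one respect: both arguments rest on the same two observations --- the characteristic-$p$ factorization $x^n-1=(x^l-1)^{p^t}$ and the identification $\mathrm{Tr}^n_l=\varphi\bigl((x^n-1)/(x^l-1)\bigr)$ --- but the paper then invokes criterion (3) of Theorem \ref{thm-oddNBG}, factoring each test polynomial as $\tfrac{x^n-1}{f_i(x)}=l_i(x)\cdot\tfrac{x^n-1}{x^l-1}$ so that $\varphi\bigl(\tfrac{x^n-1}{f_i}\bigr)(\alpha)=L_i\bigl(\mathrm{Tr}^n_l(\alpha)\bigr)$ and the equivalence drops out in one line, whereas you invoke criterion (1) and route the argument through the module-theoretic lemma on how minimal $q$-polynomials transform under the $\mathbb{F}_q[x]$-action. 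The paper's version is shorter and needs no auxiliary lemma; yours costs a little more machinery but buys the sharper intermediate statement $m_{\mathrm{Tr}^n_l(\alpha)}=m_\alpha/\gcd\bigl(m_\alpha,(x^l-1)^{p^t-1}\bigr)$, which describes exactly how each irreducible factor of $x^l-1$ survives in the annihilator of the trace, not merely the all-or-nothing NBG equivalence.
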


\begin{proof}
Let $$x^l -1=f_1(x)f_2(x)\cdots f_r(x),$$
where $f_i(x) \ (1 \leq i \leq r)$ are distinct monic irreducible polynomials in $\mathbb{F}_{q}[x].$
Then $x^n -1=(f_1(x)f_2(x)\cdots f_r(x))^{p^t}$ and Theorem \ref{thm-oddNBG} implies that
\begin{eqnarray*}
\alpha  \ \mbox{is\ a \ NBG \ for} \  \mathbb{F}_{q^n}/ \mathbb{F}_{q} &\Leftrightarrow & \alpha \ \mbox{is \ not \ a \ root \ of  } \varphi(\frac{x^n -1}{f_i(x)}) \ (1 \leq i \leq r) \\
& \Leftrightarrow & \alpha  \ \mbox{is \ not \ a \ root \ of } \varphi(l_i(x)(1+x^l+x^{2l}+\cdots+x^{(p^t -1)l})) \\
&& \mbox{where} \  l_i(x)=(x^l -1)/f_i(x) \ \ ( 1 \leq i \leq r) \\
& \Leftrightarrow & \alpha  \ \mbox{is \ not \ a \ root \ of \ } L_i(x) \otimes (x+x^{q^l}+x^{q^{2l}}+\cdots+x^{q^{(p^t -1)l}})\\
& \Leftrightarrow & L_i(\mathrm{Tr}^{n}_{l}(\alpha)) \neq 0 \ \ ( 1 \leq i \leq r) \\
& \Leftrightarrow & \mathrm{Tr}^{n}_{l}(\alpha) \ \mbox{is \ a \ NBG \ for \ } \ \mathbb{F}_{q^l}/ \mathbb{F}_{q}.
\end{eqnarray*}
\end{proof}
\begin{remark}
By combination of Theorem \ref{thm-redu} and Theorem \ref{thm-Pei}, we get the following result given by Peris \cite{Peris}.
\end{remark}

\begin{corollary}(\cite{Peris}) Let $q=p^l$ and $n=p^s$ be powers of prime number $p$ and $l,s \geq 1.$
Then $\alpha \in \mathbb{F}_{q^n}$ is a NBG for $\mathbb{F}_{q^n}/ \mathbb{F}_{q}$ if and only if
$\mathrm{Tr}(\alpha) \neq 0$, where $\mathrm{Tr}$ is the trace mapping for $\mathbb{F}_{q^n}/ \mathbb{F}_{q}$.
\end{corollary}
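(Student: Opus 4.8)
The plan is to derive the corollary directly from Theorem \ref{thm-redu}. Write $n=p^s$; since $s\ge 1$ we have $p\mid n$, and the prime-to-$p$ part of $n$ equals $1$. (This is precisely the regime in which the idempotent criterion of Theorem \ref{thm-newNBG} does not apply, since that theorem presupposes $(n,q)=1$; Theorem \ref{thm-redu} is exactly the tool designed to handle it.) Applying Theorem \ref{thm-redu} with the factorization $n=p^{s}\cdot 1$, we learn that $\alpha\in\mathbb{F}_{q^n}$ is a NBG for $\mathbb{F}_{q^n}/\mathbb{F}_q$ if and only if $\mathrm{Tr}^{n}_{1}(\alpha)$ is a NBG for $\mathbb{F}_{q^{1}}/\mathbb{F}_q$, where $\mathrm{Tr}^{n}_{1}$ is precisely the trace mapping $\mathrm{Tr}$ of $\mathbb{F}_{q^n}/\mathbb{F}_q$ and $\mathbb{F}_{q^{1}}/\mathbb{F}_q$ is the trivial degree-one extension.

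The second and final step is to identify the NBGs of $\mathbb{F}_q/\mathbb{F}_q$. By definition a NBG is an element $\beta$ whose conjugate set $\{\beta,\beta^{q},\dots,\beta^{q^{n-1}}\}$ is an $\mathbb{F}_q$-basis of the extension field; for the degree-one extension this set is the singleton $\{\beta\}$, which is an $\mathbb{F}_q$-basis of the one-dimensional space $\mathbb{F}_q$ if and only if $\beta\neq 0$. Taking $\beta=\mathrm{Tr}(\alpha)$ and chaining the two equivalences gives that $\alpha$ is a NBG for $\mathbb{F}_{q^n}/\mathbb{F}_q$ exactly when $\mathrm{Tr}(\alpha)\neq 0$, which is the assertion of the corollary.

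I expect no genuine obstacle here: all the substantive work is carried out inside Theorem \ref{thm-redu}, and the base case of a degree-one extension is immediate from the definition of a normal basis. It is worth noting that the base case $\mathbb{F}_q/\mathbb{F}_q$ plays, in this prime-power setting, the role that Theorem \ref{thm-Pei} plays when $n$ is an ordinary prime; this is why the remark frames the corollary as following from the combination of the reduction theorem with the earlier analysis.
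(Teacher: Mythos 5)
Your proof is correct and follows essentially the same route as the paper, which obtains the corollary by applying the reduction of Theorem~\ref{thm-redu} with trivial prime-to-$p$ part. If anything, your treatment is slightly more careful: the paper's remark cites Theorem~\ref{thm-Pei} as the second ingredient, but that theorem requires the residual degree to be a prime different from $p$ and so does not literally cover the degree-one base case $\mathbb{F}_q/\mathbb{F}_q$, which you correctly dispose of directly from the definition of a normal basis (a one-element set $\{\beta\}$ is a basis iff $\beta\neq 0$).
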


\section*{Acknowledgements}

K.Feng's research was supported by the Tsinghua National Lab. for Information Science and
Technology, and by the Science and Technology on Information Assurance Laboratory (No.KJ-12-01).

\end{document}